\documentclass[a4paper,11pt]{article}
\usepackage{mathrsfs}
\usepackage{amsfonts}
\usepackage[all]{xy}
\usepackage{amsmath}
\usepackage{arcs}
\usepackage{amscd}
\usepackage{extarrows}
\usepackage{CJK}
\usepackage[pdftex]{graphicx,xcolor}
\usepackage{bm}
\usepackage{amssymb}
\usepackage{amscd,latexsym,amsthm,amsfonts,amssymb,amsmath,amsxtra}
\newtheorem{prop}{Proposition}
\newtheorem{thm}{Theorem}
\newtheorem{lemma}{Lemma}
\newtheorem{cor}{Corollary}
\newtheorem{rem}{Remark}

\author{Zhangjie Wang}

\newcommand{\En}{E^{(n)}}

\newcommand{\Leg}[2]{{\brlr{\frac{#1}{#2}}}}
\newcommand{\ALeg}[2]{{\Sqlr{\frac{#1}{#2}}}}
\newcommand{\aLeg}[2]{{\SqBig{\frac{#1}{#2}}}}

\newcommand{\arr}{\ar@}
\newcommand{\arro}{\ar@/}

\newcommand{\Kc}{{K^\times}}

\newcommand{\uset}{\underset}
\newcommand{\oset}{\overset}


\newcommand{\Ma}[4]{{
\left(
  \begin{array}{cc}
    #1 & #2 \\
    #3 & #4 \\
  \end{array}
\right)
}}

\newcommand{\Ba}[2]{{\langle #1, #2\rangle}}

\newcommand{\Babig}[2]{{\big\langle #1, #2\big\rangle}}


\newcommand{\brlr}[1]{{\left( #1\right)}}
\newcommand{\brbig}[1]{{\big(#1 \big)}}
\newcommand{\brBig}[1]{{\Big(#1 \Big)}}

\newcommand{\Brlr}[1]{{\left\{ #1\right\}}}
\newcommand{\Brbig}[1]{{\big\{ #1\big\}}}
\newcommand{\BrBig}[1]{{\Big\{ #1\Big\}}}

\newcommand{\Sqlr}[1]{{\left[#1 \right]}}

\newcommand{\SqBig}[1]{{\Big[#1 \Big]}}

\font\cyr=wncyr10 scaled \magstep 1 
\newcommand{\Sha}{\mbox{\cyr X}} 

\newcommand{\diag}{{\mathrm{diag}}}

 \renewcommand{\Im}{{\mathrm{Im}}}

\newcommand{\Ker}{{\mathrm{Ker}}}

 \newcommand{\rank}{{\mathrm{rank}}}

 \newcommand{\Sel}{{\mathrm{Sel}}}

\renewcommand{\mod}{\mathrm{mod}}




 
 \newcommand{\hp}{{\hat {p}}}


\newcommand{\tda}{{\tilde {a}}} \newcommand{\tdb}{{\tilde {b}}}
\newcommand{\tdc}{{\tilde {c}}} \newcommand{\tdd}{{\tilde {d}}}


\newcommand{\bara}{{\bar {a}}} \newcommand{\barb}{{\bar {b}}}
\newcommand{\barc}{{\bar {c}}}



\newcommand{\bA}{{\mathbb {A}}} 
 
 \newcommand{\bF}{{\mathbb {F}}}

 \newcommand{\bP}{{\mathbb {P}}}
\newcommand{\bQ}{{\mathbb {Q}}} \newcommand{\bR}{{\mathbb {R}}}

 \newcommand{\bZ}{{\mathbb {Z}}}


\newcommand{\cA}{{\mathcal {A}}} 
 \newcommand{\cD}{{\mathcal {D}}}

\newcommand{\cO}{{\mathcal {O}}}





\newcommand{\fa}{{\mathfrak {a}}} 
\newcommand{\fc}{{\mathfrak {c}}} \newcommand{\fd}{{\mathfrak {d}}}

\newcommand{\fm}{{\mathfrak {m}}} 
 \newcommand{\fp}{{\mathfrak {p}}}

\renewcommand{\Sel}{{{\mathrm{Sel}}}}

\newcommand{\ideal}[1]{ {\brlr{#1, \alpha_0} }}
\newcommand{\idealc}[1]{ {\Sqlr{\ideal{#1}} }}

\numberwithin{equation}{section}
\textwidth 150mm\textheight 240mm \topmargin -15mm
 \oddsidemargin 0mm
 \evensidemargin 0mm

\title{Congruent Elliptic Curves with Non-trivial Shafarevich-Tate Groups}

\begin{document}
\maketitle

\begin{abstract}

We study a subclass of congruent elliptic curves $\En: y^2=x^3-n^2x$, where  $n$ is a positive integer congruent to $1\pmod 8$ with all prime factors congruent to $1\pmod 4$.   We characterize such $\En$ with Mordell-Weil rank zero and $2$-primary part of Shafarevich-Tate group  isomorphic to $\brbig{\bZ/2\bZ}^2$.
We also discuss   such  $\En$  with 2-primary part of Shafarevich-Tate group  isomorphic to $\brbig{\bZ/2\bZ}^{2k}$ with $k\ge2$.\footnote{
Congruent Elliptic Curves with Non-trivial Shafarevich-Tate Groups.SCIENCE CHINA Mathematics , Doi: 10.1007/s11425-015--0741-3 }

\end{abstract}

\begin{CJK}{GBK}{kai}
\maketitle

\section{Introduction}
A positive integer $n$ is called  a congruent number if it is the area of a right triangle with rational lengths, which is equivalent the  congruent elliptic curve
$$\En: y^2=x^3-n^2x $$
has positive Mordell-Weil rank.   Heegner \cite{Heegner1952} proved that   any prime number $p\equiv 5\pmod  8$  is a congruent number. Recently, Tian \cite{Tian-congruent} introduced
the induction method involving $L$-value and as a corollary he proved that:
for any $k\ge 1$, there are infinitely many square-free congruent number
$n\equiv 5 \pmod 8 $ with exactly $k$ prime divisors.

In this paper, we mainly study rank zero congruent elliptic curves with $2$-primary part of Shafarevich-Tate group non-trivial. Li-Tian \cite{LiDeLangTian}  considered those $n$ with  all prime factors   congruent to $ 1\pmod 8$.  They found a sufficient condition for $n$ to be a  non-congruent  number  with  $\Sha(\En/\bQ)[2^\infty]\simeq \brbig{\bZ/2\bZ}^2$. Rhoades \cite{rhoades20092} suspected that their condition is also necessary,  which is  proved in Theorem \ref{lem-LiTian} of this paper.  Under our case, Ouyang-Zhang \cite{ShenxingZhangYiouyang}  also  found a sufficient  condition  for $n\equiv 1  \pmod 8 $ non-congruent with $\Sha(\En/\bQ)[2^\infty]\simeq \brbig{\bZ/2\bZ}^2$.

To state our characterization, we introduce some notations.  For  a positive square-free integer $n$, let $\cA_n$ denote the   ideal class group of $K=\bQ(\sqrt{-n})$ and $D$ be  the fundamental discriminant of $K$. We denote $\cD$ to be the set of all positive square-free divisors of $D$. Then $\cD$ has a group structure.  Gauss genus theory(  see Proposition \ref{gauss} ) implies that there is a  $2$ to $1$ correspondence $\theta$ between $\cD\cap N(\Kc)$ and  $\cA_n[2]\cap2\cA_n$. Here $N(\Kc)$ denotes the image of the norm map from $\Kc$ to $\bQ^\times$,  $\cA_n[2]$ denotes the $2$-elementary subgroup of $\cA_n$, and $2\cA_n$ denotes the subgroup  of  the square elements in $\cA_n$.  We define the $2^i$-rank $h_{2^i}(n)$ of $\bQ(\sqrt{-n})$  by $\rank_{\bF_2} 2^{i-1}\cA_n/2^i\cA_n$ for $i\ge 1$. Then we have $h_4(n)=\rank_{\bF_2} \cA_n[2]\cap 2\cA_n$. If $h_4(n)=1$, then there exist $d_1,d_2\in \cD$ corresponding  to the non-trivial element in $\cA_n[2]\cap 2\cA_n$. Moreover if $n\equiv1\pmod8$ with all prime factors congruent to $1\pmod4$, then the product of odd part $d_i'$ of $d_i$ equals $n$. This is because  the kernel of $\theta$ is $\Brbig{1,n}$ in this case. Then we denote $d=d(n)$ be the maximal integer of $d_1', d_2'$.

Now we can state our first main result.
\begin{thm}\label{mainthm1}
Let $n$ be a positive square-free integer congruent to  $1 \pmod 8 $ with all prime factors congruent to $1\pmod 4 $ distinct and $h_4(n)=1$, then the following are equivalent:
\begin{enumerate}
\item[(i)] ${h_8(n)\equiv \frac{d(n)-1}4 \pmod 2 }$;
\item[(ii)] $\rank_\bZ \En(\bQ)=0$ and $\Sha(\En/\bQ)[2^\infty]\simeq \brlr{\bZ/2\bZ}^2$.
\end{enumerate}
\end{thm}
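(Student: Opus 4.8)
The plan is to combine a complete $2$-descent on $\En$ with Gauss genus theory for $K=\bQ(\sqrt{-n})$, and then to settle the remaining ambiguity by a second $2$-descent, recorded through the Cassels--Tate pairing --- it is the value of that pairing that the congruence (i) computes.

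\emph{Step 1 (descent).} Since $\En$ carries the full rational $2$-torsion $\{O,(0,0),(\pm n,0)\}$, I would use the complete $2$-descent map $P\mapsto\bigl(x(P),\,x(P)-n\bigr)$ to identify the $2$-Selmer group $\Stw$ of $\En/\bQ$ with an $\bF_2$-subspace of $\bQ^\times/(\bQ^\times)^2\times\bQ^\times/(\bQ^\times)^2$ supported on the primes dividing $2n$ and cut out by explicit local solvability conditions over each $\bQ_v$, $v\mid 2n\infty$ (equivalently one may run the $2$-isogeny descent through $\phi\colon\En\to\bar E$, $\bar E\colon y^2=x^3+4n^2x$, and its dual $\hat\phi$). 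The exact sequence
\[
0\longrightarrow \En(\bQ)/2\En(\bQ)\longrightarrow\Stw\longrightarrow\Sha(\En/\bQ)[2]\longrightarrow 0,
\]
together with $\dimt\En(\bQ)/2\En(\bQ)=\rank_\bZ\En(\bQ)+2$, reduces the arithmetic content to the single identity
\[
\rank_\bZ\En(\bQ)+\dimt\Sha(\En/\bQ)[2]=\dimt\Stw-2 .
\]

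\emph{Step 2 (genus theory fixes $\dimt\Stw$ and isolates two distinguished classes).} At each odd $p\mid n$ the local conditions become power-residue conditions which, by Hasse's norm theorem, are equivalent to membership of certain divisors $d\mid 2n$ in $N(\Kc)$; by Gauss genus theory (Proposition \ref{gauss}) these are controlled by $\cA_n[2]$, $\cA_n[2]\cap 2\cA_n$ and the map $\theta$, whereas the conditions at $2$ reduce to elementary congruences by the hypotheses $n\equiv 1\pmod 8$ and ``every prime factor $\equiv 1\pmod 4$''. Carrying this out, $h_4(n)=1$ forces $\dimt\Stw=4$, whence
\[
\rank_\bZ\En(\bQ)+\dimt\Sha(\En/\bQ)[2]=2 ,
\]
and it singles out, modulo the image of $\En(\bQ)[2]$, a two-dimensional space of Selmer classes represented by the divisors $d_1,d_2$ with $d_1'd_2'=n$ that correspond under $\theta$ to the non-trivial class of $\cA_n[2]\cap 2\cA_n$ (so $d(n)=\max\{d_1',d_2'\}$). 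Since $n\equiv 1\pmod 8$ the global root number of $\En$ is $+1$, so $\rank_\bZ\En(\bQ)$ is even; hence $(\rank_\bZ\En(\bQ),\dimt\Sha(\En/\bQ)[2])\in\{(0,2),(2,0)\}$, and in the first case $\Sha(\En/\bQ)[2^\infty]\simeq(\bZ/2\bZ)^2$ holds exactly when $\Sha(\En/\bQ)[2]\cap 2\Sha(\En/\bQ)=0$.

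\emph{Step 3 (the Cassels--Tate pairing --- the crux).} The Cassels--Tate pairing on $\Sha(\En/\bQ)$ is alternating and non-degenerate modulo the maximal divisible subgroup; pulling it back along $\Stw\twoheadrightarrow\Sha(\En/\bQ)[2]$ produces an alternating form on $\Stw$ whose radical contains the image of $\En(\bQ)/2\En(\bQ)$. Granting Step 2, a short linear-algebra argument (using that $\En(\bQ)[2]$ maps to a $2$-dimensional subspace of that radical and that $d_1,d_2$ span $\Stw$ over it) shows that (ii) is equivalent to the form having radical of dimension exactly $2$, and --- the form being alternating --- this is in turn equivalent to $\Babig{d_1}{d_2}=1$ in $\bF_2$: if $\Babig{d_1}{d_2}=0$ then $d_1,d_2$ both lie in the radical, which corresponds either to a rational point of infinite order (so $\rank_\bZ\En(\bQ)=2$) or to a class in $\Sha[4]\setminus\Sha[2]$ (or a divisible part), each contradicting (ii); whereas $\Babig{d_1}{d_2}=1$ forces $d_1,d_2$ into $\Sha$, gives $\rank_\bZ\En(\bQ)=0$, and makes the pairing on $\Sha(\En/\bQ)[2]$ non-degenerate. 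One then evaluates $\Babig{d_1}{d_2}$ by Cassels' explicit formula: after choosing good local points on the relevant homogeneous spaces it becomes a finite sum of local Hilbert symbols, which by the genus-theoretic identifications of Step 2 --- and precisely because $n\equiv 1\pmod 8$ with all prime factors $\equiv 1\pmod 4$ makes the $2$-adic contributions transparent --- collapses to a R\'edei symbol attached to the factorization $n=d_1'd_2'$. By the R\'edei--Reichardt description of the $8$-rank this symbol contributes $h_8(n)\bmod 2$, while the residual ramified term at the odd primes dividing $d(n)$ contributes $\tfrac{d(n)-1}{4}\bmod 2$; hence $\Babig{d_1}{d_2}\equiv 1+h_8(n)+\tfrac{d(n)-1}{4}\pmod 2$, which is $1$ exactly when (i) holds. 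This yields (i) $\Leftrightarrow$ (ii).

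\emph{Main obstacle.} The hardest part will be Step 3: making Cassels' pairing genuinely computable needs explicit local solutions on the homogeneous spaces, and then the Hilbert symbols at $2$ must be pinned down --- this is exactly where the hypotheses $n\equiv 1\pmod 8$ and ``every prime factor $\equiv 1\pmod 4$'' are indispensable, since they trivialise the $2$-part of the relevant genus characters and let the $2$-adic term coalesce into a single R\'edei symbol --- after which that symbol has to be matched with $h_8(n)\bmod 2$ via R\'edei's reciprocity law, and the remaining local terms identified with $d(n)\bmod 8$. A subsidiary point is that ``radical of dimension exactly $2$'' reads off correctly without assuming $\Sha(\En/\bQ)[2^\infty]$ finite, since any divisible part of corank $m\ge 1$ would enlarge the radical to dimension $\ge 2+m>2$; alternatively, in the favourable case $\rank_\bZ\En(\bQ)=0$ can be obtained independently from the non-vanishing of $\La$ via Coates--Wiles.
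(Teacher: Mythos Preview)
Your overall strategy --- $2$-descent, genus theory for $\bQ(\sqrt{-n})$ to pin down $\Sel_2(\En)$, then the Cassels pairing to decide (ii), and finally translation into $h_8(n)$ --- is exactly the paper's. Where your sketch drifts from what actually happens is in the mechanics of Step~3.

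In the paper the two generators of $\Sel_2(\En)/\En[2](\bQ)$ are $(n,n,1)$ and $(d,d^*,dd^*)$, and the key simplification is to replace $(n,n,1)$ by its torsion-translate $\Lambda'=(-1,1,-1)$. Pairing $(d,d^*,dd^*)$ against $(-1,1,-1)$ makes every local symbol at an odd prime $p\mid n$ trivially equal to $1$ (because $-1\in\bQ_p^{\times2}$ for $p\equiv1\pmod4$), so the whole pairing is carried by the single place $p=2$. Both pieces you anticipate --- the part that eventually matches $h_8(n)$ and the $\tfrac{d(n)-1}{4}$ part --- come out of that one $2$-adic Hilbert symbol; your attribution of $\tfrac{d(n)-1}{4}$ to ``the residual ramified term at the odd primes dividing $d(n)$'' is therefore the wrong way round.

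You have also suppressed a case split that governs the whole computation: the form of the second generator depends on whether the symmetric matrix $A$ (common to the Monsky and R\'edei matrices) has rank $k-2$ or $k-1$. In the first case $d$ itself lies in $N_{K/\bQ}(K^\times)$, one is forced to have $d\equiv5\pmod8$, and one computes on $D_{(d,d,1)}$; in the second case only $2d$ is a norm and one must work on $D_{(2d,2d,1)}$. Your claim that ``the $2$-adic contributions are transparent'' would fail in this second case if you pair the classes attached to $2d$ and $2n/d$ directly. The paper's $(-1,1,-1)$ trick handles both cases uniformly and produces $(-1)^{(d-1)/4}(c,-1)_2$, with $c$ coming from a primitive representation $2^r c^2=da^2+(n/d)b^2$; the factor $(c,-1)_2$ is then tied to $h_8(n)$ via Proposition~\ref{gauss-Redei-c-version} (the ``$C\in\Im R$'' criterion) rather than through a named R\'edei reciprocity.

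A minor point: the root-number/parity input is unnecessary. Once the Cassels pairing on the $2$-dimensional quotient is non-degenerate, the exact sequence~(\ref{longexactseq}) gives $\#\Sel_4(\En)=\#\Sel_2(\En)$ directly, and this already forces $\rank_\bZ\En(\bQ)=0$ with $\Sha(\En/\bQ)[2^\infty]\simeq(\bZ/2\bZ)^2$.
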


The  strategy of the proof of Theorem \ref{mainthm1} is the following.
\begin{itemize}
\item Note that a necessary condition of (ii) holds is $s_2(n)=2$, where $s_2(n)$ is the pure $2$-Selmer rank given by $\rank_{\bF_2}\Sel_2(\En)/\En[2](\bQ)$.  By
 Monsky matrix (\S 2.1) and Redei matrix (\S 3.1), we deduce that
 $s_2(n)=2$ is equivalent to  $h_4(n)=1$.
\item  From the  short exact sequence $0\to \En[2]\to \En[4]\oset{\times 2}\to \En[2]\to 0$, we derive the following long exact sequence
\begin{eqnarray}\label{longexactseq}
0\to  \En[2](\bQ)/2\En[4](\bQ) \to \Sel_2(\En)  \to \Sel_{4}(\En) \to
\Im \Sel_{4}(\En)\to 0
\end{eqnarray}
Moreover    Cassels \cite{Cassels1998CasPairing} introduced a pairing on $\Sel_2(\En)/\En[2](\bQ)$  with kernel given by
 $\Im\Sel_4(\En)/\En[2](\bQ)$ (\S 2.2). Note that (ii) is equivalent to $\#\Sel_2(\En)=\#\Sel_4(\En)$, which is equivalent to $\#\En[2](\bQ)/2\En[4](\bQ)=\#\Im\Sel_4(\En)$ by the long exact sequence. From the Cassels pairing, we know (ii) is equivalent to  the Cassels pairing is non-degenerate.
\item   Cassels pairing is computable(\S 4.1), its non-degeneration in this case is equivalent to a quartic residue symbol.
\item Gauss genus theory (\S 3.2) reduces this residue symbol to $8$-rank $h_8(n)$ of $\bQ(\sqrt{-n})$.
\end{itemize} \quad

For the case $k\ge2$,  we have the following second main result.
\begin{thm}\label{mainthm}
Let $n$ be a  positive square-free integer with all  prime factors congruent to $1 \pmod 8$ and $k$ be a positive integer.  Assume that either    $h_8(n)=k-1$   with the ideal class $[(2,\sqrt{-n})]\not\in 4\cA_n$ or  $h_8(n)=k$. If $n$ has a decomposition $d_1\cdots d_k$ with $d_i>1$ satisfying the following ideal class group conditions:
\begin{enumerate}
\item $h_4(d_i)=1,  1\le i\le k$;
\item $\Leg{p_{i_1}}{p_{i_2}}=1$ for any prime divisor $p_{i_j}$ of $d_{i_j}$ with $i_1\not=i_2$, where $\Leg{\cdot}{\cdot}$ is the Legendre symbol;
\item $h_8(d_i)=0, 1\le i\le k$.
\end{enumerate}
Then
\begin{equation}\label{eq:anysha-rank0}
\rank_{\bZ} \En(\bQ)=0, \;\;\Sha(\En/\bQ)[2^\infty]\simeq \brbig{\bZ/2\bZ}^{2k}
\end{equation}
\end{thm}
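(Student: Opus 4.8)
The plan is to reduce the $k\ge 2$ case to an application of the rank-zero criterion together with a dimension count for the $2$-Selmer group, following the same four-step strategy outlined after Theorem~\ref{mainthm1} but now tracking the multiplicative structure coming from the decomposition $n=d_1\cdots d_k$. First I would compute the pure $2$-Selmer rank $s_2(n)=\rank_{\bF_2}\Sel_2(\En)/\En[2](\bQ)$ via the Monsky matrix of \S 2.1. Condition (2), that $\Leg{p_{i_1}}{p_{i_2}}=1$ for primes dividing distinct factors $d_{i_1},d_{i_2}$, makes the Monsky matrix block-diagonal up to controlled correction terms, so that its corank is the sum of the coranks of the blocks attached to the $d_i$. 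Condition (1), $h_4(d_i)=1$, forces each block to contribute exactly $2$ to the pure Selmer rank (by the Redei-matrix translation of \S 3.1, exactly as in the first bullet of the Theorem~\ref{mainthm1} strategy), so $s_2(n)=2k$. Combined with the hypothesis on $h_8(n)$ and the position of $[(2,\sqrt{-n})]$, this is the input needed to pin down $\Sha(\En/\bQ)[2^\infty]$ once we know the rank is zero.

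Next I would invoke the long exact sequence~\eqref{longexactseq} and the Cassels pairing of \S 2.2: the statement~\eqref{eq:anysha-rank0} is equivalent to $\#\Sel_2(\En)=\#\Sel_4(\En)$, which by the long exact sequence is equivalent to non-degeneracy of the Cassels pairing on the $2k$-dimensional space $\Sel_2(\En)/\En[2](\bQ)$. Here I would use the explicit formula for the Cassels pairing from \S 4.1. The key observation is that a basis of $\Sel_2(\En)/\En[2](\bQ)$ can be chosen to consist of classes supported on the individual factors $d_i$ (using condition (2) again to decouple the local conditions at primes of different blocks), so that the Gram matrix of the Cassels pairing is block-diagonal with $2\times 2$ blocks. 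Condition (3), $h_8(d_i)=0$, together with Theorem~\ref{mainthm1} applied to each $d_i$ individually (note $h_8(d_i)=0$ and the parity of $\tfrac{d(d_i)-1}{4}$ must be checked), shows each $2\times 2$ block is non-degenerate; alternatively one argues directly that each diagonal block is the non-degenerate alternating form on $\bF_2^2$. Hence the whole pairing is non-degenerate, giving $\Sel_4(\En)=\Sel_2(\En)$ and therefore $\rank_\bZ\En(\bQ)=0$ with $\Sha(\En/\bQ)[2^\infty]\simeq(\bZ/2\bZ)^{2k}$, after ruling out higher $2$-divisibility of $\Sha$ using the $h_8(n)$ hypothesis and the constraint on $[(2,\sqrt{-n})]$ via Gauss genus theory (\S 3.2), exactly as in the last bullet of the Theorem~\ref{mainthm1} argument.

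The cross terms are where I expect the real work to be. Block-diagonality of the Monsky matrix and of the Cassels Gram matrix is never exact: there are off-diagonal entries indexed by pairs $(p_{i_1},p_{i_2})$ with $i_1\ne i_2$, and condition (2) only kills the Legendre-symbol part of these entries, not necessarily the additive $\bF_2$-contributions coming from the $2$-adic and archimedean places or from the quartic-residue refinements in the Cassels formula. So the main obstacle will be showing that, under conditions (1)--(3) plus the $h_8(n)$ hypothesis, all such cross terms vanish in $\bF_2$ — equivalently that the relevant quartic residue symbols linking different $d_i$ are trivial. I would handle this by a careful bookkeeping of the Redei/Monsky entries modulo $2$, using that each $d_i\equiv 1\pmod 8$ has all prime factors $\equiv 1\pmod 8$ (so all the $2$-adic local symbols are trivial) and that condition (2) upgrades, via quadratic reciprocity and the $\equiv1\pmod 8$ hypothesis, to triviality of the mixed quartic symbols. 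Once the genuine block-diagonal structure is established, the rest is the per-block analysis already carried out for Theorem~\ref{mainthm1}, and the global statement about $h_8(n)$ versus $\sum_i h_8(d_i)$ follows from the exactness of the Redei sequence relating the $8$-rank of $\bQ(\sqrt{-n})$ to those of the $\bQ(\sqrt{-d_i})$ under the splitting hypothesis.
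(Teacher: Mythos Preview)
Your overall plan---compute $s_2(n)=2k$ via the block-diagonal Monsky matrix, then show the Cassels pairing is non-degenerate---matches the paper's, and your Monsky-matrix step is correct. The gap is in where you locate the role of the $h_8(n)$ hypothesis and in your picture of the Cassels Gram matrix.

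The Gram matrix is \emph{not} block-diagonal with $2\times 2$ blocks indexed by the $d_i$. With the basis $\Lambda_i=(1,d_i,d_i)$, $\Lambda_i'=(d_i,d_i,1)$ the paper computes (Proposition~\ref{prop-anySha}) the cross pairings $\langle\Lambda_i',\Lambda_j\rangle=\Leg{c_i}{d_j}$ for $i\ne j$, where $c_i$ is the ``$z$'' in a primitive solution of $z^2=d_i x^2+(n/d_i)y^2$. These Legendre symbols are exactly the off-diagonal entries of the higher Redei matrix $A^*$ from \S3.3, and there is no reason they vanish from conditions (1)--(3) alone: $c_i$ is an auxiliary integer coming from a Diophantine equation, not a product of the $p_j$, so your proposed mechanism (quadratic reciprocity plus the $1\pmod 8$ hypothesis upgrading condition~(2) to quartic triviality) does not apply to it. What kills these cross terms is precisely the hypothesis $h_8(n)=k$, or $h_8(n)=k-1$ with $[(2,\sqrt{-n})]\notin 4\cA_n$: via the epimorphism~\eqref{eq:R*-8rank} this forces $A^*=0$. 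So the $h_8(n)$ assumption is not a post-hoc check to ``rule out higher $2$-divisibility of $\Sha$'' (nothing further is needed once the pairing is non-degenerate); it is the input that makes the cross terms vanish.

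A second, smaller issue: you cannot literally apply Theorem~\ref{mainthm1} to each $d_i$ to handle the diagonal blocks, because the Cassels pairing here is on $\Sel_2(E^{(n)})$, not on $\Sel_2(E^{(d_i)})$---the genus-one curves and the local computations involve $n$, not $d_i$. In the paper the diagonal contribution is governed by $D^*=\diag(1-h_8(d_1),\dots,1-h_8(d_k))$; condition~(3) gives $D^*=I_k$, and with $A^*=0$ the full Gram matrix becomes $\begin{pmatrix}\Psi & I_k\\ I_k & 0\end{pmatrix}$, which is non-singular regardless of $\Psi$. This is the actual structure of the argument (see Theorem~\ref{mainthm2} and its proof).
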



We remark that for $k=2$, the condition is not empty by  Remark \ref{rem:non-empty}.  And in \S4.2  we can get congruent elliptic curves such that (\ref{eq:anysha-rank0}) holds under   a more general condition (see Theorem \ref{mainthm2}).  As to the distribution result    of elliptic curves in Theorem \ref{mainthm1} and \ref{mainthm}, we will study this in a
coming paper \cite{zhangjiewang2015congDist} but with Theorem \ref{mainthm} limited to $k=2$.

\section{Congruent Elliptic Curves }

Let $\En$ be the congruent elliptic curve $y^2=x^3-n^2x$ defined over $\bQ$, where $n$ is a positive square-free integer. Then from  classical result,  we can identify the $2$-Selmer group of $\En$ with:
\[\Sel_2(\En)=\BrBig{\Lambda=(d_1,d_2,d_3)\in \brbig{\bQ^\times/{\bQ^\times2}}^{3} \Big| D_\Lambda(\bA)\not=\emptyset,\; d_1d_2d_3\in \bQ^{\times2} } \]
where $\bA$ is the adele ring over $\bQ$ and  for $\Lambda=(d_1,d_2, d_3), D_\Lambda$ is a genus one curve in $\bP^3$ defined by:
\begin{equation}
\label{eq:Sel2En}
\begin{cases}
H_1:& -nt^2+d_2u_2^2-d_3u_3^2=0\\
H_2:& -nt^2+d_3u_3^2-d_1u_1^2=0\\
H_3:& 2nt^2+d_1u_1^2-d_2u_2^2=0
\end{cases}
\end{equation}
Under this  identification,  $\En(\bQ)/2\En(\bQ)$ is given  by: $(x-n,x+n,x)$ if
$ (x,y)\in \En(\bQ)-\En(\bQ)[2] $ and  $\En(\bQ)[2]$ corresponds to
$$ \BrBig{ (2,2n,n), (-2n,2,-n), (-n,n,-1), (1,1,1) }$$
Then these  $4$ elements  correspond to $(n,0), (-n,0), (0,0), O$ respectively under the Kummer map.

\subsection{Monsky Matrix}

Monsky ( see the appendix of Heath-Brown \cite{HeathBrownMonsky1993Selmergroup} )  represented $\Sel_2(\En)/\En(\bQ)[2]$ with the kernel of a $2k\times 2k$ matrix $M$ over $\bF_2$, which is closely related to $4$-rank of $\bQ(\sqrt{-n})$.

Before   introducing the Monsky matrix, we define  the additive Legendre symbol $\ALeg{a}{p}$, where $p$ is a prime and $a$ an integer  coprime to $p$, it is $0$ if the Legendre symbol $\Leg ap=1$ and $1$ otherwise.

For a positive square-free odd integer $n=p_1\cdots p_k$, the corresponding Monsky matrix $M_n$ is defined by:
\[M_n=\Ma{A+D_{-2}}{D_2}{D_2}{A+D_2}\]
where $D_u=\diag\Brlr{\SqBig{\frac u{p_1}},\cdots,\SqBig{\frac u{p_k}}}$ and  $A=(a_{ij})_{k\times k}$ with $a_{ij}=  \ALeg{p_j}{p_i} $ if $i\not=j$, and $a_{ii}=\sum_{l\not=i}a_{il}$.

To connect $\Sel_2(\En)/\En(\bQ)[2]$ with the kernel of corresponding Monsky matrix, we first choose representatives of $\Sel_2(\En)/\En(\bQ)[2]$  with nice properties, see the following lemma:
\begin{lemma} \label{HeathBrownMonsky1993Selmergroup}
For a positive square-free odd integer $n$, the following holds:
\begin{enumerate}
\item[(1)] Any $\Lambda\in \Sel_2(\En)$ can be represented by $(d_1,d_2,d_3)$ with $d_i$ square-free integers, $d_2>0, d_1|2n, d_2|2n $ and $d_1,d_2$ have the same parity;
\item[(2)] Assume that all elements in $\Sel_2(\En)$ have representatives as in (1), then for any $\Lambda\in \Sel_2(\En)$, there is a unique $\Lambda_0\in\En(\bQ)[2]$
such that $\Lambda\Lambda_0=(d_1,d_2,d_3)$ with $d_1>0, d_2>0, d_1|n, d_2|n$;
\end{enumerate}
\end{lemma}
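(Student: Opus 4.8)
The plan is to read off the shape of a $2$-Selmer element $\Lambda=(d_1,d_2,d_3)$ place by place from the adelic solvability condition $D_\Lambda(\bA)\neq\emst$, and then for part (2) to use the $\En(\bQ)[2]$-translates to normalize. For (1), I would first replace each $d_i$ by its unique square-free integer representative, so $v_p(d_i)\in\{0,1\}$ for every prime $p$. For an odd prime $\ell\nmid 2n$ the curve $\En$ has good reduction, so the local image of $\En(\bQ_\ell)/2\En(\bQ_\ell)$ in $\Ho1{\bQ_\ell}{\En[2]}$ is the unramified subgroup, whose classes are represented by $\ell$-adic units; since $D_\Lambda(\bQ_\ell)\neq\emst$ forces $\Lambda$ into that subgroup, each $d_i$ is prime to $\ell$. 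Hence the square-free integers $d_1,d_2,d_3$ have all their prime factors among $\{2\}\cup\{p:p\mid n\}$, i.e. $d_1\mid 2n$, $d_2\mid 2n$, $d_3\mid 2n$.

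For the sign condition I would use a nonzero real point of \eqref{eq:Sel2En}: the relations $d_2u_2^2=2nt^2+d_1u_1^2$ and $d_2u_2^2=nt^2+d_3u_3^2$ show that if $d_2<0$ then, splitting according to whether $t=0$ (when $t=0$ all three equations give $d_1u_1^2=d_2u_2^2=d_3u_3^2$, necessarily nonzero and of a common sign), one is forced to have $d_1<0$ and $d_3<0$ as well, whence $d_1d_2d_3<0$, contradicting $d_1d_2d_3\in\bQ^{\times2}$; so $d_2>0$. For the parity I would take a primitive integral point of \eqref{eq:Sel2En} over $\bZ_2$ and chase $2$-adic valuations through $H_1,H_2,H_3$, checking that the hypothesis ``$d_1,d_2$ of opposite parity'' forces $t,u_1,u_2,u_3$ all even, contradicting primitivity; hence $d_1\equiv d_2\pmod 2$.

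For (2), I would exploit that $\En(\bQ)[2]=\Brlr{(1,1,1),\,(2,2n,n),\,(-2n,2,-n),\,(-n,n,-1)}$ acts on the representatives from (1) by coordinatewise multiplication followed by square-free reduction, and that on the pair $(\sign d_1,\ d_1\bmod 2)$ these four elements induce, respectively, the identity, flip the parity, flip the sign, and flip both. Thus $\En(\bQ)[2]$ acts simply transitively on these four ``types'', so there is a unique $\Lambda_0\in\En(\bQ)[2]$ with $\Lambda\Lambda_0=(d_1',d_2',d_3')$ satisfying $d_1'>0$ and $d_1'$ odd. Since $\Lambda\Lambda_0$ again has a representative as in (1), we still have $d_1'\mid 2n$, $d_2'\mid 2n$, $d_2'>0$, $d_1'\equiv d_2'\pmod2$; as $d_1'$ is odd this forces $d_1'\mid n$, and then $d_2'$ is odd, positive, and divides $n$. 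Uniqueness of $\Lambda_0$ is immediate from the simple transitivity (equivalently, every nontrivial element of $\En(\bQ)[2]$ changes the sign or the parity of $d_1$, so none can preserve the normalized form).

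The step I expect to be the main obstacle is the $\bQ_2$-computation showing $d_1\equiv d_2\pmod 2$: it is the only point that is not a one-line local argument, the valuation bookkeeping through the three quadrics $H_1,H_2,H_3$ has several cases, and it is essential first to have arranged $d_i\mid 2n$ (so $v_2(d_i)\le 1$) before running it. This is exactly the computation carried out in the appendix of \cite{HeathBrownMonsky1993Selmergroup}, and I would either reproduce it or cite it; everything else (square-free representatives, good reduction at odd $\ell$, the real-sign argument, and the $\En(\bQ)[2]$-bookkeeping) is routine.
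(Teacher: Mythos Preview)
Your plan is correct and follows essentially the same local-by-local analysis as the paper. Two small remarks. First, for the divisibility $d_i\mid 2n$ the paper does not invoke good reduction but runs the very same valuation descent you outline for $p=2$ at an odd prime $p\mid d_1$, $p\nmid 2n$: from $H_3$ one gets $p\mid t$, then $H_1$ gives $p\mid u_3$, then $H_2$ gives $p\mid u_1$, then $H_3$ gives $p\mid u_2$; your unramified-cohomology argument is a cleaner substitute. Second, the $\bQ_2$ parity check you flag as the main obstacle is in fact a single short case (by symmetry assume $2\mid d_1$, $2\nmid d_2$, hence $2\mid d_3$; then $H_3\Rightarrow 2\mid u_2$, $H_2\Rightarrow 2\mid t$, $H_3\Rightarrow 2\mid u_1$, $H_2\Rightarrow 2\mid u_3$), so there is no real case explosion. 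One slip in your part (2): in the list $(1,1,1),(2,2n,n),(-2n,2,-n),(-n,n,-1)$ the element $(-2n,2,-n)$ flips \emph{both} sign and parity of $d_1$ (since $-2n$ is negative and even), while $(-n,n,-1)$ flips only the sign; the simple transitivity claim, and hence your uniqueness argument, is unaffected.
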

\begin{proof} (1): Since $d_i\in \bQ^\times/\bQ^{\times2}$, we may chose $d_i$ square-free integers. For $\Lambda\in\Sel_2(\En)$  we have $D_\Lambda(\bR)\not=\emptyset$, this implies that $d_2>0$.

If $d_1\nmid2n$, then there is a prime divisor $p|d_1$ with $p\nmid 2n$. Since $d_1d_2d_3\in \bQ^{\times2}$, hence $p|d_2d_3$, we assume $p|d_2$, which implies $p\nmid d_3$. Consider $D_\Lambda(\bQ_p)$: from $H_3$ we know that $p|t$, then use $H_1$ and $p|d_2, p \nmid  d_3$ we get $p|u_3$. Comparing $p$-divisibility of two sides of $H_2$, we get  $p|u_1$. Again consider $H_3$, then $p|u_2$ by counting $p$-divisibility of two sides. We arrive at $p|(t,u_1,u_2,u_3)$, then we can easily get they are infinitely $p$-divisible, which is impossible, hence $d_1|2n$. Similarly $d_2|2n$.

$d_1, d_2$ has same parity: as else $2$ divide exact one of $d_1,d_2$, for example $2|d_1$, then $2 \nmid  d_2$, this implies $2|d_3$, so from $H_3$ we know $2|u_2$ and from $H_2$ we know $2|t$, then by $2$-divisibility of $H_3$, we get $2|u_1$, again by $2$-divisibility of $H_2$, we get $2|u_3$, hence $2|(t,u_1,u_2,u_3)$, same argument as above implies this is impossible, hence $d_1, d_2$ has same parity.\\

(2): If $d_1$ is even, then by (1) so is $d_2$, then $\Lambda_0=(2,2n,n)$ is the unique element in $\En(\bQ)[2]$ with the required property if $d_1>0$, else $(-2n,2,-n)$ is the unique one. If $d_1$ is odd, then so is $d_2$, if $d_1<0$ then $(-n,n,1)$ is the unique element satisfying the required property, else $(1,1,1)$ is.
\end{proof}

For the representatives of $\Sel_2(\En)/\En(\bQ)[2]$ as in (2) of Lemma \ref{HeathBrownMonsky1993Selmergroup}, we have a bijection
\begin{equation}\label{eq:sel-Monsky}
\Sel_2(\En)/\En(\bQ)[2] \longrightarrow  \BrBig{ x\in \bF_2 ^{2k}\;\Big|\; M_n x=0\;}  \end{equation}
given by  $(d_1,d_2,d_3)  \mapsto \brbig{v_{p_1}(d_1),\cdots,v_{p_k}(d_1), v_{p_1}(d_2),\cdots v_{p_k}(d_2)}^T$, where $v_p(d)$ denotes the $p$-adic valuation of $d$.

Now we explain why (\ref{eq:sel-Monsky}) is a bijection: For $\Lambda$ chosen as in (2) of Lemma \ref{HeathBrownMonsky1993Selmergroup},  we have $D_\Lambda(\bQ_p)\not=\emptyset$ for $p \nmid   n$, while for $p|n$ the local solvability of $D_\Lambda$ is:
\begin{enumerate}
\item[(1)] $p \nmid  d_1, p \nmid  d_2:\;\Leg{d_1}p=\Leg{d_2}p=1$;
\item[(2)] $p \nmid  d_1, p|d_2: \; \Leg{2d_1}p=\Leg{2n/d_2}p=1$;
\item[(3)] $p|d_1,p \nmid  d_2:\; \Leg{-2n/d_1}p=\Leg{2d_2}p=1$;
\item[(4)] $p|d_1, p|d_2:\; \Leg{-n/d_1}p=\Leg{n/d_2}p=1$.
\end{enumerate}

Assume $d_1=\prod_1^kp_i^{x_i}, d_2=\prod_1^k p_i^{y_i}$, then the above says that: for $p_i$
\begin{eqnarray*}
x_i\sum_{l\not=i}\aLeg{p_l}{p_i}+\sum_{j\not=i}x_j\aLeg{p_j}{p_i}&=&
x_i\aLeg{n/d_1}{p_i}+(1-x_i)\aLeg{d_1}{p_i}
=\aLeg{-2}{p_i}x_i+\aLeg{2}{p_i}y_i\\
y_i\sum_{l\not=i}\aLeg{p_l}{p_i}+\sum_{j\not=i}y_j\aLeg{p_j}{p_i}&=&
y_i\aLeg{n/d_2}{p_i}+(1-y_i)\aLeg{d_2}{p_i}
=\aLeg{2}{p_i}x_i+\aLeg{2}{p_i}y_i
\end{eqnarray*}
Hence the map (\ref{eq:sel-Monsky}) is a bijection.

Moreover, Monsky proved that $M$ has even rank if $n\equiv 1, 3\pmod 8$.

\subsection{Cassels Pairing}

Cassels \cite{Cassels1998CasPairing} introduced a skew-symmetric bilinear pairing on the $\bF_2$-vector space\\
 $\Sel_2(\En)/\En(\bQ)[2]$:
\begin{equation}\label{eq:casselspair}
\Ba{ }{ }: \Sel_2(\En)/\En(\bQ)[2]\times \Sel_2(\En)/\En(\bQ)[2] \to \Brlr{ \pm 1 }
\end{equation}
It is defined as follows: for any $\Lambda=(d_1,d_2,d_3), \Lambda'=(d_1',d_2',d_3')\in \Sel_2(\En)$, let $D_\Lambda$ be the  genus one curve corresponding to $\Lambda$ and $Q_i\in H_i(\bQ)$ be a global point on $H_i$, the existence of $Q_i$ follows from Hasse-Minkovski principle, since $H_i$ is locally solvable everywhere as $D_\Lambda$  is so. Let $L_i$ be the tangent line of $H_i$ at $Q_i$ and view it as a hyperplane in $\bP^3$. Let $P=(P_p)\in D_\Lambda(\bA)$  be any adelic point on $D_\Lambda$, then
\[\Ba\Lambda{\Lambda'}=\prod_p \Ba\Lambda{\Lambda'}_p,\quad
\Ba\Lambda{\Lambda'}_p=\prod_{i=1}^3 \brBig{L_i(P_p), d_i'}_p\]
where $p$ runs over all places of $\bQ$ and $\brbig{\cdot,\cdot}_p$ is the Hilbert symbol at $\bQ_p$.

Note that skew-symmetry over $\bF_2$ is also symmetry, so the left kernel and right kernel of the Cassels pairing $\Ba{}{}$ are the same. An important property of Cassels pairing says that the kernel of the Cassels pairing is
\[\Im\Sel_4(\En)/\En(\bQ)[2]\]
where $\Im\Sel_4(\En)$ is associated to the long exact sequence derived from\\
$0\to E[2]\to E[4]\oset{\times 2}\to E[2]\to 0$.

In fact almost all local Cassels pairing are $1$.
\begin{lemma}[Cassels \cite{Cassels1998CasPairing} Lemma 7.2]\label{lem-Cas} The local Cassels pairing $\Ba{ }{ }_p=+1$ if $p$ satisfying
\begin{enumerate}
\item[(1)] $p\not=2,\infty$;
\item[(2)] The coefficients of $H_i$ and $L_i$ are all integral at $p$ for $ i=1,2,3$;
\item[(3)] Modulo $D_\Lambda$ and $L_i$ by $p$, they define a curve of genus $1$ over $\bF_p$ together with tangents to it.
\end{enumerate}
\end{lemma}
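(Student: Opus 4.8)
The plan is to reduce the assertion to the fact that the Hilbert symbol $\brbig{u,v}_p$ of two $p$-adic units is trivial at an odd place, after choosing the local point $P_p$ on $D_\Lambda$ to have good reduction away from the reduced tangent loci.

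First I would observe that hypotheses (1)--(3) force $p\nmid 2nd_1d_2d_3$. Each of $H_1,H_2,H_3$ omits one of the variables $u_1,u_2,u_3$, hence is a quadric of rank $\le 3$; and since $H_1+H_2+H_3=0$ we have $D_\Lambda=H_i\cap H_j$ for any two indices $i\ne j$. If $p$ divided $n$ or some $d_i$, the relevant reduced quadric $\tilde H_i$ would drop rank further, so $\tilde D_\Lambda$ would fail to be a smooth curve of genus one, contradicting (3). Now $\Lambda'\in\Sel_2(\En)$, so by Lemma \ref{HeathBrownMonsky1993Selmergroup}(1) we may take $d_1'\mid 2n$ and $d_2'\mid 2n$, and $d_3'\equiv d_1'd_2'$ modulo squares as $d_1'd_2'd_3'\in\bQ^{\times2}$; hence each $d_i'$ is a $p$-adic unit. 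It therefore suffices to find $P_p\in D_\Lambda(\bQ_p)$ with every $L_i(P_p)$ a $p$-adic unit: then each factor $\brbig{L_i(P_p),d_i'}_p$ is $+1$, and so is $\Ba\Lambda{\Lambda'}_p=\prod_{i=1}^3\brbig{L_i(P_p),d_i'}_p$.

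To produce such a $P_p$, scale it to primitive integral coordinates and let $\bar P\in\tilde D_\Lambda(\bF_p)$ be its reduction. Since $\tilde D_\Lambda$ is smooth by (3), $\bar P$ is a smooth point, and conversely every $\bF_p$-point of $\tilde D_\Lambda$ lifts to $D_\Lambda(\bQ_p)$ by Hensel; moreover $v_p(L_i(P_p))=0$ precisely when $\bar P\notin\tilde L_i$. Because $L_i$ is the tangent plane to the quadric cone $H_i$, its reduction meets $\tilde D_\Lambda=\tilde H_i\cap\tilde H_j$ in only a bounded number $N$ of geometric points (a tangent plane cuts the cone in a doubled ruling line), so at most $N$ points of $\tilde D_\Lambda$ lie on some $\tilde L_i$; by the Hasse bound $\#\tilde D_\Lambda(\bF_p)\ge p+1-2\sqrt p$, which exceeds $N$ for all $p$ outside an explicit finite set of small primes, a good $\bar P$ exists. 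For the remaining small $p$ satisfying (1)--(3), I would use that $\Ba\Lambda{\Lambda'}_p$ is independent of the choice of $P_p\in D_\Lambda(\bQ_p)$ --- a standard point in Cassels' construction, again using $d_1'd_2'd_3'\in\bQ^{\times2}$ --- so that the locally constant function $P_p\mapsto\prod_i\brbig{L_i(P_p),d_i'}_p$ on the non-empty set $D_\Lambda(\bQ_p)$, being $+1$ on the open subset of points reducing into the smooth locus off every $\tilde L_i$, is $+1$ everywhere by reciprocity along $\tilde D_\Lambda$; alternatively one disposes of these finitely many primes by computing the $L_i$ directly from \eqref{eq:Sel2En}.

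The main obstacle is the geometry underlying this last step: understanding the reduction of the tangent planes $L_i$ well enough to bound $\tilde L_i\cap\tilde D_\Lambda$, and then guaranteeing a $\bQ_p$-rational point of $D_\Lambda$ off that bad locus in \emph{every} admissible residue characteristic --- which is exactly why the independence of $\Ba\Lambda{\Lambda'}_p$ on $P_p$, together with a small reciprocity input on the reduced curve, is needed. The remaining ingredients --- the cone structure of the $H_i$, the triviality of the unit Hilbert symbol at odd $p$, and the reduction to the $d_i'$ being $p$-adic units --- are routine.
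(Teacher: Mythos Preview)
The paper does not give its own proof of this lemma; it is quoted directly from Cassels' paper (Lemma~7.2 there) and used as a black box. There is therefore no argument in the present paper to compare your proposal against.

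That said, your sketch is along the correct lines. The reduction to showing that each $d_i'$ is a $p$-adic unit and then exhibiting $P_p\in D_\Lambda(\bQ_p)$ with all $L_i(P_p)\in\bZ_p^\times$ is exactly the right mechanism, and your geometry is accurate: the tangent plane to a rank-$3$ quadric cone meets it in a doubled ruling, so $\tilde L_i\cap\tilde D_\Lambda$ has at most two geometric points, and the Hasse bound then furnishes a good $\bar P$ once $p+1-2\sqrt p>6$, i.e.\ for $p\ge 13$.

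The gap is the handling of $p\in\{3,5,7,11\}$. Invoking the independence of $\Ba\Lambda{\Lambda'}_p$ from $P_p$ only tells you the value is a constant, not what that constant is; ``reciprocity along $\tilde D_\Lambda$'' is not an argument as written; and ``computing the $L_i$ directly from \eqref{eq:Sel2En}'' is not viable, since the $Q_i$ and hence the $L_i$ are arbitrary global choices, not determined by $\Lambda$. One clean way to close this, using only the ingredients you already invoke: pass to the unramified extension $\bQ_{p^r}$ with $r$ odd and large enough that the Hasse bound over $\bF_{p^r}$ yields a point off all three $\tilde L_i$. The pairing is well-defined over $\bQ_{p^r}$ as well, and evaluating it at this good point gives $+1$; on the other hand, evaluating it at the image of any $P_p\in D_\Lambda(\bQ_p)$ and using that restriction of the Hilbert symbol along an odd-degree unramified extension is injective on $\{\pm1\}$ forces $\Ba\Lambda{\Lambda'}_p=+1$ over $\bQ_p$ too.
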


\section{Gauss Genus Theory}
In this section, we will introduce Gauss genus theory: classical theory, $4$-rank and Redei matrix, $8$-rank and higher Redei matrix.

Let $K$ be an imaginary quadratic number field with fundamental discriminant $D$ and ideal class group $\cA$( written   additively). The classical Gauss genus theory characterize $\cA[2]$ and $2\cA$, where $\cA[2]$ consists of all those ideal classes killed by $2$ and $2\cA$ denotes all those ideal  classes which are squares.

To state Gauss genus theorem, we give some notations: let $p_1,\cdots, p_t$ be the different prime divisors of $D$, $\cO_K$ the   ring of algebraic integers of $K$. $N=N_{K/\bQ}$ is the norm from $K$ to $\bQ$  and $\alpha_0$ is the integer $\frac{D+\sqrt D}2$, whence we have $\cO_K=\bZ+\bZ\alpha_0$. In this section  we use $(a,b)$ to denote  the greatest common divisor of $a,b$ if $a,b\in \bZ$  else   the ideal generated by $a,b$ if $a,b\in K$. We  also use $(c_1,\cdots,c_k)$ to denote a vector in $\bF_2^k$. We hope this will not cause confusion since we can   tell apart them easily from the context.
\begin{prop}\label{gauss-cla}\quad
\begin{enumerate}
\item[(1)] $\cA[2]$ is an elementary abelian $2$-group generated by the ideal classes $\idealc{p_i}$ and $\rank_{\bF_2} \cA[2]=t-1$;
\item[(2)] For an ideal $\fa$, then $[\fa]\in 2\cA$ if and only if  there is a non-zero integer $z$ and $a\in\fa$ such that: $z^2\cdot N\fa=Na$.
\end{enumerate}
\end{prop}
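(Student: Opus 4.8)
The plan is to prove the two parts separately; the fact that drives both is that $K$ is imaginary, so $N=N_{K/\bQ}$ takes values in $\bQ_{>0}$ on $\Kc$ and in particular $-1\notin N\Kc$.

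For (1) I would run the ``ambiguous ideal'' argument. Let $\sigma$ be complex conjugation, the nontrivial element of $\Gal(K/\bQ)$. Since $\fa\cdot\sigma\fa=(N\fa)$ is principal for every ideal $\fa$, one has $[\sigma\fa]=-[\fa]$ in $\cA$, so the group $\cA^{\sigma}$ of $\sigma$-invariant classes is exactly $\cA[2]$. Each $\idealc{p_i}$, i.e.\ the class of the prime $\fp_i$ of $\cO_K$ above the ramified prime $p_i$, lies in $\cA[2]$ because $\fp_i^{2}=(p_i)$. Conversely, if $[\fa]\in\cA[2]=\cA^{\sigma}$ then $\sigma\fa=\fa(\gamma)$ for some $\gamma\in\Kc$; applying $\sigma$ gives $(N\gamma)=(1)$, hence $N\gamma\in\{\pm1\}$, and $N\gamma>0$ forces $N\gamma=1$. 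Hilbert's Theorem~90 gives $\gamma=\sigma(\beta)/\beta$ with $\beta\in\Kc$, and then $\beta^{-1}\fa$ is a $\sigma$-stable ideal in the class of $\fa$. Factoring a $\sigma$-stable ideal into primes and extracting the principal ideals $(p)=\fp\bar\fp$ (split $p$), $(p)=\fp$ (inert $p$), $(p_i)=\fp_i^{2}$ (ramified $p_i$) leaves a product of the $\fp_i$, so the classes $\idealc{p_i}$ generate $\cA[2]$. For the rank, $\sqrt D=2\alpha_0-D\in\cO_K$, so $(\sqrt D)=\prod_i\fp_i^{\,v_{p_i}(D)}$ is principal and yields a relation; and if $\prod_{i\in S}\fp_i=(\beta)$ is principal then, the ideal being $\sigma$-stable, $\sigma\beta/\beta$ is a unit, hence $\pm1$ unless $D\in\{-3,-4\}$, and comparing $\fp$-adic valuations pins $S$ down to $\varnothing$ or the fixed subset $\{\,i:v_{p_i}(D)\ \text{odd}\,\}$. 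Hence the kernel of the surjection $\bF_2^{\,t}\to\cA[2]$ has order $2$ (the cases $D=-3,-4$ being trivial), so $\rank_{\bF_2}\cA[2]=t-1$. (Alternatively the count $\#\cA[2]=2^{t-1}$ is immediate from Chevalley's ambiguous class number formula, again using $-1\notin N\Kc$.)

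For (2) I would first rewrite the right-hand condition. If $a\in\fa$, write $(a)=\fa\fc$ with $\fc$ an integral ideal; then $Na=N\fa\cdot N\fc$, so the existence of $z\ne0$ and $a\in\fa$ with $z^2N\fa=Na$ is equivalent to $N\fc$ being a perfect square, i.e.\ to: $[\fa]$ is the class of an ideal of square norm. For ``$\Leftarrow$'': if $N\fc$ is a square then $v_{\fp}(\fc)$ is even at every ramified $\fp$ and $v_{\fp}(\fc)+v_{\bar\fp}(\fc)$ is even at every split pair, so after pulling out the principal ideals $(p)$ arising from inert primes and from split pairs $\fc$ becomes a square ideal; hence $[\fc]=[\fa]\in 2\cA$. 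For ``$\Rightarrow$'': if $[\fa]=2[\fb]$ with $\fb$ integral, write $\fa=\fb^{2}(\gamma)$; then $\fa\,\bar\fb^{2}=((N\fb)^{2}\gamma)$ is a principal ideal contained in $\fa$, so $a:=(N\fb)^{2}\gamma\in\fa$ and $Na=N\fa\cdot(N\fb)^{2}$, i.e.\ $z=N\fb$ works.

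The main obstacle is the bookkeeping in (1): the prime $2$ behaves differently according as $v_2(D)\in\{0,2,3\}$ (and one must be slightly careful with the generator $\idealc{2}$ of $\fp_2$ in the chosen convention), the fields $\bQ(\sqrt{-1})$ and $\bQ(\sqrt{-3})$ with extra units need separate mention, and -- most importantly -- passing from ``$t$ generators with one relation'' to ``$\rank=t-1$'' genuinely requires ruling out a second independent relation, which is precisely where the unit computation $\sigma\beta=\pm\beta$ and the valuation comparison enter. Part (2), once phrased in terms of square-norm ideals, is routine.
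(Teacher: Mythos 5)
The paper contains no proof of this proposition at all: right after the statement it refers the reader to Chapter~7 of Hecke and to the elementary proof of Nemenzo--Wada, so there is no in-paper argument to measure yours against. Your argument is the classical ambiguous-class proof and, as far as I can check, it is correct. For (1): $\fa\cdot\sigma\fa=(N\fa)$ makes $\sigma$ act as $-1$ on $\cA$, so $\cA[2]=\cA^{\sigma}$; Hilbert~90 (where $N\gamma=1$ really does use that $K$ is imaginary) puts a $\sigma$-stable ideal in each ambiguous class, so the ramified primes generate; and the principal ideal $(\sqrt D)=\prod_i\fp_i^{v_{p_i}(D)}$ together with the unit computation $\sigma\beta=\pm\beta$ shows the kernel of the surjection $\bF_2^{t}\to\cA[2]$ is exactly $\bigl\{0,\ \text{the vector supported on }\{i:v_{p_i}(D)\ \text{odd}\}\bigr\}$, of order $2$, giving rank $t-1$ (with $D=-3,-4$ trivial, as you note). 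For (2), the translation of $z^2N\fa=Na$ into ``the integral ideal $\fc=(a)\fa^{-1}$ has square norm'', the splitting-type bookkeeping, and the converse via $a=(N\fb)^2\gamma$ are all fine. Two small points: in the forward direction $[\fc]=-[\fa]$, not $[\fa]$ (harmless since $2\cA$ is a subgroup, or replace $\fc$ by $\bar\fc$); and your caution about the generator at $2$ is justified --- when $v_2(D)=2$ the literal ideal $(2,\alpha_0)$ is the unit ideal, so the statement must be read with $\fp_i$ the ramified prime above $p_i$, exactly as you read it. The valuation comparison in the three cases $v_2(D)\in\{0,2,3\}$ does need to be written out to pin down the unique nontrivial relation, but it is the routine check you describe. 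Compared with the cited sources, your route is essentially the classical Hilbert-90/ambiguous-ideal treatment (close to Hecke), whereas Nemenzo--Wada avoid such machinery; writing it out as you propose would make the paper self-contained at the cost of that case analysis at the prime $2$.
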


For the proof of Proposition \ref{gauss-cla} we refer to  Chapter 7 of Hecke \cite{heckeGTM77}, there is also an elementary proof see Nemenzo-Wada \cite{nemenzo1992}.

\subsection{$4$-rank and Redei matrix}

To deal with $4$-rank and $8$-rank, we will prove some well-known results, these can be seen in some papers( for example Jung-Yue \cite{yueJung2011eightrank} ) but with no  proof. As we mentioned in the introduction, we will relate $\cA[2]\cap 2\cA$ with norm elements of $K^\times$,  solving quadratic homogeneous Diophantine equation and also Redei matrix. Note that we have $h_4(\cA)=\rank_{\bF_2} \cA[2]\cap 2\cA$, so the $4$-rank is reduced to study $\cA[2]\cap 2\cA$. Now we first set up the  $2-1$ correspondence on $\cA[2]\cap 2\cA$.
\begin{prop}\label{gauss} The following map is an epimorphism
$$\theta: \cD(K)\cap N_{K/\bQ}(\Kc) \to \cA[2]\cap2\cA,\quad d\mapsto  \Sqlr{\brlr{d,\alpha_0}} $$
with $\Ker(\theta)=\Brbig{ 1, -D' }$, where   $D'$ denotes  the square-free part of $D$ and $\cD(K)$ denotes all those positive square-free divisors of $D$ with group multiplication \[d_1\odot d_2:=\frac{d_1d_2}{(d_1,d_2)^2}\]
\end{prop}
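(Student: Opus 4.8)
\textbf{Proof proposal for Proposition \ref{gauss}.}

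The plan is to verify in turn: (a) $\theta$ is well-defined, (b) $\theta$ is a group homomorphism, (c) $\theta$ is surjective, and (d) $\Ker(\theta)=\{1,-D'\}$. For well-definedness, given $d\in\cD(K)\cap N(\Kc)$ I would first write $d=N(\gamma)$ for some $\gamma\in\Kc$ and, clearing denominators and using that $d$ is square-free, reduce to the case where $\gamma$ is an algebraic integer with $N(\gamma)=dz^2$ for an integer $z$; then the ideal $(d,\alpha_0)$ has norm dividing $d$, and a prime-by-prime local analysis (splitting the rational primes $p\mid D$ into those dividing $d$ and those not) shows $N((d,\alpha_0))=d$ exactly. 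Combined with the criterion in Proposition \ref{gauss-cla}(2) — the existence of $\gamma\in(d,\alpha_0)$ and $z\in\bZ$ with $z^2 N((d,\alpha_0))=N\gamma$ — this shows $[(d,\alpha_0)]\in 2\cA$; and $[(d,\alpha_0)]\in\cA[2]$ because $(d,\alpha_0)$ is a product of the ramified primes $\Sqlr{\ideal{p_i}}$, each of order dividing $2$, by Proposition \ref{gauss-cla}(1). So the target is correct.

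For the homomorphism property, the key computation is that the ideals $(d_1,\alpha_0)(d_2,\alpha_0)$ and $(d_1\odot d_2,\alpha_0)$ differ by a principal ideal; concretely, writing $e=(d_1,d_2)$, one checks $(d_1,\alpha_0)(d_2,\alpha_0)=(e)\cdot(d_1\odot d_2,\alpha_0)$ by comparing the exponent of each ramified prime on both sides (this is where $\alpha_0=\tfrac{D+\sqrt D}2$ and the relation $\alpha_0^2\equiv D\alpha_0 \pmod{\text{stuff}}$, equivalently $\alpha_0\bar\alpha_0 = \tfrac{D^2-D}4$, enters). Since $\cD(K)\cap N(\Kc)$ is a subgroup of $(\cD(K),\odot)$ — norms are multiplicative, and $d_1\odot d_2$ has the same radical behaviour — this makes $\theta$ a homomorphism. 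For surjectivity I would use Proposition \ref{gauss-cla}: any class in $\cA[2]\cap 2\cA$ is represented by an ideal $\fa$ which is a product of some subset $S$ of the ramified primes, and being in $2\cA$ forces (via the norm criterion) the product $d=\prod_{p_i\in S}p_i$ to be, up to sign and up to the ambiguous factor $D'$, a norm from $\Kc$; then $\theta(d)=[\fa]$.

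The main obstacle — and the part needing the most care — is pinning down the kernel. One inclusion is easy: $\theta(1)=[(1,\alpha_0)]=[\cO_K]=0$, and $\theta(-D')$: here $-D'$ is the square-free part of $D$ (recall $D<0$), so $(-D',\alpha_0)$ is the product of \emph{all} ramified primes, whose class is the class of $(\sqrt D)$ up to the even primes, hence trivial in $\cA$; a short direct check gives $\theta(-D')=0$. The reverse inclusion $\Ker(\theta)\subseteq\{1,-D'\}$ is where Proposition \ref{gauss-cla}(1) does the work: if $\theta(d)=0$ then $(d,\alpha_0)=(\mu)$ is principal, so $d=N((d,\alpha_0))=|N(\mu)|$ up to sign — but $N(\mu)>0$ automatically for $K$ imaginary — hence $d$ or the complementary divisor $D'/\!\!\gcd$ is the norm of a principal ideal; translating this back through the $\odot$-structure, the classes killed by $\theta$ form exactly the subgroup generated by the single relation coming from $(\sqrt D)$ being principal, i.e. $\{1,-D'\}$ (of order $2$ since $D$ is not a square and $K\neq\bQ$). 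I would present the kernel computation as: $\theta$ factors through $\cD(K)\cap N(\Kc)$ modulo the subgroup generated by $-D'$, and the induced map to $\cA[2]\cap 2\cA$ is injective by a dimension/counting argument using $\rank_{\bF_2}\cA[2]=t-1$ from Proposition \ref{gauss-cla}(1). The one genuinely delicate point throughout is bookkeeping the prime $2$ when $2\mid D$, since then $2$ ramifies and $\alpha_0$ interacts with it; I would handle that by the standard case split on $D\bmod 8$.
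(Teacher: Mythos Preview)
Your treatment of (a)--(c) tracks the paper's proof closely: well-definedness via Proposition~\ref{gauss-cla}(2), the homomorphism property via the ideal identity $(d_1,\alpha_0)(d_2,\alpha_0)=((d_1,d_2),\alpha_0)^2\,(d_1\odot d_2,\alpha_0)$ (the paper first does the coprime case and then reduces, arriving at the same formula), and surjectivity by reading Proposition~\ref{gauss-cla} backwards.

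The real divergence is in (d). The paper determines the kernel by a direct Diophantine analysis: if $(d,\alpha_0)=(\alpha)$ is principal, write $\alpha=ud+v\sqrt{D}$ with $u,v$ half-integers, take norms to get $du^2+\frac{-D}{d}v^2=1$, and then solve this equation by a case split on $D\bmod 4$ and $D\bmod 8$, showing the only solutions are $d=1$ and $d=-D'$. Your proposal instead aims for a counting argument, which is a genuinely different and potentially slicker route---but as written it is not quite in focus. You cannot count directly on $\theta$, since neither $|\cD(K)\cap N(\Kc)|$ nor $|\cA[2]\cap 2\cA|$ is known a priori. What does work is to extend $\theta$ to the ambient map $\Theta:\cD(K)\to\cA[2]$, which is a surjective homomorphism (same ideal computation, and the norm hypothesis is not used for the target $\cA[2]$); then $|\cD(K)|=2^t$ and $|\cA[2]|=2^{t-1}$ force $|\Ker(\Theta)|=2$, and since $(-D',\alpha_0)=(\sqrt{D'})$ is principal one identifies $\Ker(\Theta)=\{1,-D'\}$. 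Finally $\Ker(\theta)=\Ker(\Theta)\cap N(\Kc)=\{1,-D'\}$ because $-D'=N(\sqrt{D'})$. Your invocation of $\rank_{\bF_2}\cA[2]=t-1$ points in exactly this direction, but the passage beginning ``hence $d$ or the complementary divisor\ldots'' is muddled and should be replaced by the clean extension-to-$\Theta$ argument. With that fix, your approach avoids the paper's $D\bmod 8$ casework entirely; the paper's method, on the other hand, is more self-contained and makes the kernel elements emerge from an explicit equation rather than from a cardinality count.
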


\begin{proof}
First $\theta$ is well-defined: For any $d\in \cD(K)\cap N_{K/\bQ}(\Kc)$, then $\theta(d)=\idealc d\in \cA[2]$ since $d|D $ as $d\in \cD(K)$. From $d\in N_{K/\bQ}(\Kc)$, we know there is some $\alpha\in \Kc$ such that $d=N\alpha$, then by multiply a suitable positive integer $z$, we can write $z\alpha=xd+y\alpha_0\in (d,\alpha_0)$. Hence from Proposition \ref{gauss-cla} we get $\theta(d)=\idealc d\in 2\cA$ since $(d,\alpha_0)$ has norm  $d$.

Second $\theta$ is a homomorphism: For any $d_i\in \cD(K)\cap N_{K/\bQ}(\Kc)$, if $(d_1,d_2)=1$ then we have the following ideal equalities( note multiplication of ideals is written  additively )
\begin{equation}\label{eq:ideal d1 d2 coprime}
\ideal{d_1}+\ideal{d_2}=\brlr{d_1d_2, \alpha_0^2, d_1\cdot\alpha_0, d_2\cdot \alpha_0 } =\brlr{d_1d_2, \alpha_0}
\end{equation}
Then for general $d_1, d_2$ we have $d_i=\frac{d_i}{(d_1,d_2)}\cdot (d_1,d_2)$, whence from equation (\ref{eq:ideal d1 d2 coprime}) we have
\[(d_i,\alpha_0)=\brBig{\frac{d_i}{(d_1,d_2)}, \alpha_0}+\brBig{(d_1,d_2), \alpha_0}\]
Consequently
\[(d_1,\alpha_0)+(d_2,\alpha_0)=2\brBig{(d_1,d_2), \alpha_0}+\sum_{i=1}^2 \brBig{\frac{d_i}{(d_1,d_2)}, \alpha_0}=2\brBig{(d_1,d_2), \alpha_0}+(d_1\odot d_2,\alpha_0) \]
Thus we have
$$\theta(d_1)+\theta(d_2)=\theta(d_1\odot d_2)$$
Hence $\theta$ is a group homomorphism.

Now we show that $\theta$ is surjective: It is clear that from genus theory ( see Proposition \ref{gauss-cla} ),  every element of  $\cA[2]$ is of the form $\idealc d$ with $d$ a positive square-free divisor $D$. So for any   $[\fa]\in \cA[2]\cap 2\cA$, there is a $d\in \cD(K)$, such that $[\fa]=\idealc d$. Since $\idealc d \in 2\cA$, then by Proposition \ref{gauss-cla}  there is a non-zero integer $z$ and an $\alpha\in \ideal d$ such that
$$z^2d=z^2\cdot N_{K/\bQ}\Brbig{  \ideal d}=N\alpha$$
Therefore $d\in N_{K/\bQ}(\Kc)$.

We begin to determine the kernel of $\theta$: For any $d\in \Ker(\theta)$, then $d|D$ is a positive square-free integer such that $d=N(\beta)$ and $\ideal d=(\alpha)$ is a principal ideal.  Thus there are integers $x,y\in \cO_K$ such that
\[\alpha=xd+y\alpha_0\]
Let $x=m+n\alpha_0, y=m'+n'\alpha_0\in \cO_K$ with $m,n,m',n'\in\bZ$, then
$$\alpha=ud+v\sqrt D$$
where
\[u=m+\frac D{2d}\cdot (nd+m')+n' \cdot\frac{D(D+1)}{4d}, \quad v=\frac{nd+n'D+m'}2\]
Hence $N\alpha=d\brBig{du^2+\frac{-D}d v^2}=N\Brbig{\ideal d}=d$, then
\begin{equation}\label{eq:Normalpha}
du^2+\frac{-D}d v^2=1
\end{equation}

(i): If $D\equiv 1 \pmod 4$, then $d$, $\frac Dd$ are odd and $2u,2v\in \bZ$, then from equation (\ref{eq:Normalpha}) we get
\[ d(2u)^2+\frac{-D}d(2v)^2=4 \]
Note that the Diophantine equation $ar^2+bs^2=4$ with $a,b$ positive odd integer and $ab\equiv 3(\mod 4)$ has integer solution only if $a=1$ or $b=1$, hence   $d=1$ or $\frac{-D}d=1$, thus we have two solutions $d=1$ or $-D$.\\

(ii): If $4|D$, we assume that $D\not=-4$, else it is trivial. Now we divided into two cases according to $d$'s parity:

If $d$ is odd, then $u, 2v\in \bZ$ and $4d|D$, thus from equation (\ref{eq:Normalpha}) we get \[du^2+\frac{-D}{4d}(2v)^2=1\]
Then from the solvability of the Diophantine equation $ar^2+bs^2=1$ with $a,b$ positive odd, we get $d=1$ or $\frac{-D}{4d}=1$, the latter solution requires that $4||D$ since $d$ is odd.

If $d$ is even, then $2u,2v\in \bZ$, and $2|d, 4\nmid d, 2\Big|\frac{-D}d$, thus from equation (\ref{eq:Normalpha}) we get:
\[d(2u)^2+\frac{-D}d(2v)^2=4\]

Note the Diophantine equation $at^2+bs^2=4$ with $a,b$ positive even integer and $4 \nmid  a$ can have integral solution only if $a=b=2$ or $b=4$: the first case requires that $D=-4$ which has been excluded; the latter case implies that $d=\frac{-D}4$ and requires $8|D$ since $d$ is even.

In summary,  we always have $\Ker(\theta)=\Brbig{ 1, -D' }$. This completes the proof.
\end{proof}

Now we are ready to introduce the Redei matrix $R$ ( called reduced Redei matrix in Jung-Yue \cite{yueJung2011eightrank} ) of $K$: first we assume that if $2|D$, then $p_t=2$. Then the Redei matrix $R=(r_{ij})$ is a $(t-1)\times t$ matrix over $\bF_2$ defined by:
\[ r_{ii}=\ALeg{D/p_i^*}{p_i}, \quad
r_{ij}=\ALeg{p_j}{p_i} \quad \text{if } i\not=j, \quad {where } \quad p_i^*=(-1)^{\frac{p-1}2}p_i\]

Note $R$ is very similar to the matrix $A$ occurring in the definition of Monsky matrix. Now we can relates $R$ to $h_4(n)$ via the following isomorphism which is parallel to the Monsky matrix.
\begin{prop}\label{gauss-Redei} The following is an isomorphism:
\[\cD(K)\cap N(\Kc)\to \BrBig{ X\in \bF_2^t\Big| RX=0},\quad  d\mapsto X_d:=\brBig{v_{p_1}(d), \cdots, v_{p_{t}}(d)}^T\]
with inverse map given by $(x_1,\cdots,x_t)^T\mapsto \prod_{i=1}^t p_i^{x_i}$.
\end{prop}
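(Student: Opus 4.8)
The plan is to verify that the stated map is a well-defined group isomorphism by combining Proposition~\ref{gauss} with a linear-algebra translation of the defining conditions of the Redei matrix. First I would observe that both sides are $\bF_2$-vector spaces: the domain $\cD(K)\cap N(\Kc)$ carries the group law $\odot$ from Proposition~\ref{gauss}, and $d\mapsto X_d=(v_{p_1}(d),\dots,v_{p_t}(d))^T$ is visibly a group homomorphism from $\cD(K)$ (with $\odot$) into $\bF_2^t$, since $v_{p_i}(d_1\odot d_2)=v_{p_i}(d_1)+v_{p_i}(d_2)\pmod 2$; this map is injective on $\cD(K)$ because a positive square-free divisor of $D$ is determined by which $p_i$ divide it, and surjective onto $\bF_2^t$ with the stated inverse. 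So the content is to show that, under this bijection $\cD(K)\cong\bF_2^t$, the subset $\cD(K)\cap N(\Kc)$ corresponds exactly to $\ker R$.

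The key step is therefore the equivalence, for $d=\prod p_i^{x_i}\in\cD(K)$, between $d\in N_{K/\bQ}(\Kc)$ and $RX_d=0$. I would prove this by unwinding the Hasse norm theorem (or directly, the classical criterion for a rational integer to be a norm from an imaginary quadratic field): $d$ is a norm from $K$ iff it is a local norm at every place, and since $K/\bQ$ is ramified exactly at the $p_i$, the only obstructions come from the primes dividing $D$ together with possibly $2$ and $\infty$ (the infinite place imposing $d>0$, already built into $\cD(K)$). For each prime $q$, being a local norm at $q$ translates into a Hilbert-symbol condition $(d,D)_q=1$, and by bilinearity of the Hilbert symbol this is a linear condition on the exponent vector $X_d$ over $\bF_2$. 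Writing out $(d,D)_{p_i}=1$ in terms of Legendre/additive-Legendre symbols recovers precisely the $i$-th row of $R$ acting on $X_d$: the diagonal entry $r_{ii}=\ALeg{D/p_i^*}{p_i}$ encodes the contribution of $D/p_i$ against $p_i$, and $r_{ij}=\ALeg{p_j}{p_i}$ encodes the contribution of $p_j$ against $p_i$. The product formula for Hilbert symbols shows the $t$ local conditions at the $p_i$ are not independent (one is redundant), which matches $R$ having $t-1$ rows rather than $t$.

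Once this equivalence is in place, the map in the statement is the restriction of the isomorphism $\cD(K)\xrightarrow{\sim}\bF_2^t$ to the subgroup $\cD(K)\cap N(\Kc)$, with image $\ker R$, hence an isomorphism onto $\{X\in\bF_2^t\mid RX=0\}$; the inverse is the claimed one, being the restriction of $(x_1,\dots,x_t)^T\mapsto\prod p_i^{x_i}$. I would also remark that this is consistent with Proposition~\ref{gauss}: the $2$-to-$1$ map $\theta$ onto $\cA[2]\cap 2\cA$ has kernel of order $2$, so $\#(\cD(K)\cap N(\Kc))=2\cdot\#(\cA[2]\cap2\cA)=2^{h_4+1}$, forcing $\dim_{\bF_2}\ker R=h_4+1$, i.e.\ $\rank R=t-1-h_4$; this gives the promised link between $R$ and $h_4(n)$.

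\textbf{Main obstacle.} The delicate point is the careful bookkeeping at the prime $2$ and the sign primes $p_i^*$: translating ``$d$ is a norm from $\bQ(\sqrt{-n})$'' into exactly the rows of $R$ as defined (with the twist $p_i^*=(-1)^{(p_i-1)/2}p_i$ in the diagonal entry, and the convention $p_t=2$ when $2\mid D$) requires matching the quadratic-reciprocity rearrangement of the Hilbert symbols term by term. I expect the cleanest route is to reduce to the case $d$ odd by the structure of $\cD(K)$ and the parity conventions already fixed before the statement, then invoke the elementary solvability criterion for $ar^2+bs^2=1$ used in the proof of Proposition~\ref{gauss} to handle the residual $2$-adic condition, rather than quoting Hasse's theorem wholesale.
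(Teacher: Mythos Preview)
Your proposal is correct and follows essentially the same route as the paper: reduce ``$d$ is a norm from $K$'' to local solvability of $dz^2=x^2-Dy^2$ via Hasse--Minkowski, read off the conditions at the odd ramified primes as Legendre symbols, and check that these are exactly the rows of $R$ applied to $X_d$; the redundancy you attribute to the product formula is what the paper obtains by summing the first $t-1$ equations and invoking quadratic reciprocity. The paper in fact states this proposition simply as the special case $c=1$ of the next proposition (Proposition~\ref{gauss-Redei-c-version}), whose proof carries out precisely the computation you sketch, including the explicit Hilbert-symbol verification $(d,D)_2=\Leg{c}{d_0\tdd}$ that disposes of your ``main obstacle'' at $2$; since the general $c$ version is needed anyway for the $8$-rank analysis, you may find it cleaner to prove that directly and specialise, rather than doing the $c=1$ case separately.
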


This Proposition is a special case of Proposition \ref{gauss-Redei-c-version} with $c=1$. Hence from Proposition   \ref{gauss} and \ref{gauss-Redei}  we know that $h_4(\cA)=t-1-\rank_{\bF_2} R$.

To deal with $4$-rank of $K$, these propositions are sufficient. But for  $8$-rank, we have to generalize Proposition \ref{gauss-Redei} to general $c$, see the following:
\begin{prop}\label{gauss-Redei-c-version} Let $c$ be a positive  odd integer coprime with $D$ such that for any prime divisor $p $ of $c$, we have
$\Leg Dp=1$. Then the following is an isomorphism:
\begin{eqnarray*}
\BrBig{ d\in \cD(K) \Big| dc\in N_{K/\bQ}(\Kc) } &\to & \BrBig{ X\in \bF_2^t\Big| RX=C
 }  \quad \text{ where }\; C=\brlr{ \ALeg c{p_1}, \cdots, \ALeg c{p_{t-1}}}^T  \\
d &\mapsto& X_d:=\brBig{v_{p_1}(d), \cdots, v_{p_{t}}(d)}^T
\end{eqnarray*}
and the inverse map is given by $(x_1,\cdots,x_t)^T\mapsto \prod_{i=1}^t p_i^{x_i}$.
\end{prop}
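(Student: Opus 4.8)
The plan is to convert ``$dc\in N_{K/\bQ}(\Kc)$'' into a finite list of Hilbert-symbol conditions, to notice that under the hypotheses all but $t-1$ of them are automatic, and to recognize the surviving $t-1$ conditions as exactly the rows of $RX=C$. Since $dc\ne0$ and $D<0$ is not a square, $dc$ lies in $N_{K/\bQ}(K^\times)$ iff the conic $X^2-DY^2=dc\,Z^2$ has a rational point, and by Hasse--Minkowski (equivalently the Hasse norm theorem for the quadratic extension $K/\bQ$) this holds iff the Hilbert symbol $(D,dc)_v=1$ for every place $v$ of $\bQ$. Thus the whole argument is a bookkeeping of these local symbols; observe that, $d$ being a square-free divisor of $D$, the vector $X_d$ is well defined in $\bF_2^t$ and records $d$ faithfully.

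First I would discard the places where the symbol is automatically trivial. At $v=\infty$: $(D,dc)_\infty=1$ since $D<0<dc$. At an odd prime $q\mid c$ (so $q\nmid D$, hence $q\nmid d$ as $d\mid D$): $(D,dc)_q=(D,c)_q=\Leg Dq^{\,v_q(c)}=1$ by the hypothesis $\Leg Dq=1$. If $D$ is odd then $D\equiv1\pmod4$, so $K_2/\bQ_2$ is split or unramified, whence every $2$-adic unit is a local norm and $(D,dc)_2=1$ because $v_2(dc)=0$. The remaining places give a trivial symbol as well (odd $p\nmid 2Ddc$ tautologically; all odd prime divisors of $Ddc$ not dividing $c$ lie in $\{p_1,\dots,p_t\}$ since $d\mid D$), so only $v\in\{p_1,\dots,p_t\}$ can contribute, with $p_t=2$ when $2\mid D$. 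Hence Hilbert reciprocity $\prod_v(D,dc)_v=1$ gives $\prod_{i=1}^t(D,dc)_{p_i}=1$, so the condition at $p_t$ follows from those at $p_1,\dots,p_{t-1}$, and therefore
\[
dc\in N_{K/\bQ}(\Kc)\ \Longleftrightarrow\ (D,dc)_{p_i}=1\ \text{ for }\ i=1,\dots,t-1 .
\]

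The core step is to identify $(D,dc)_{p_i}$, $1\le i\le t-1$, with the $i$-th coordinate of $RX_d+C$. Work additively in $\bF_2$ (identifying $\pm1$ with $0,1$) and write $d=\prod_{j=1}^t p_j^{x_j}$, $x_j=v_{p_j}(d)$, so that $(D,dc)_{p_i}=(D,c)_{p_i}+\sum_{j=1}^t x_j\,(D,p_j)_{p_i}$. As $c$ and each $p_j$ with $j\ne i$ (including $p_t=2$ when $2\mid D$) are units at the odd prime $p_i$ while $v_{p_i}(D)=1$, one gets $(D,c)_{p_i}=\ALeg c{p_i}=C_i$ and $(D,p_j)_{p_i}=\ALeg{p_j}{p_i}=r_{ij}$ for $j\ne i$. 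For $j=i$, factor $D=p_i^*\cdot(D/p_i^*)$ with $p_i^*=(-1)^{(p_i-1)/2}p_i$; bimultiplicativity in the first argument gives $(D,p_i)_{p_i}=(p_i^*,p_i)_{p_i}+\ALeg{D/p_i^*}{p_i}$, and the identity $(p_i^*,p_i)_{p_i}=1$ — which follows from $(a,-a)_{p_i}=1$ together with $\Leg{-1}{p_i}=1$ when $p_i\equiv1\pmod4$ — leaves $(D,p_i)_{p_i}=\ALeg{D/p_i^*}{p_i}=r_{ii}$. Summing, $(D,dc)_{p_i}=C_i+(RX_d)_i$, so the equivalence above becomes $RX_d=C$. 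Since $d\mapsto X_d$ and $(x_i)_i\mapsto\prod_i p_i^{x_i}$ are mutually inverse bijections $\cD(K)\leftrightarrow\bF_2^t$, they restrict to the asserted bijection; and when $c=1$ one has $C=0$ and $v_p(d_1\odot d_2)\equiv v_p(d_1)+v_p(d_2)\pmod2$, so the restriction is a group isomorphism $\cD(K)\cap N_{K/\bQ}(\Kc)\xrightarrow{\ \sim\ }\Ker R$, recovering Proposition \ref{gauss-Redei}.

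The main obstacle is the Hilbert-symbol bookkeeping in the previous paragraph — in particular the identity $(p_i^*,p_i)_{p_i}=1$, which is precisely what forces the ``starred'' entry $D/p_i^*$ in the diagonal of $R$ rather than a naive $D/p_i$. A secondary point to get right is that $d$ may be even when $4\mid D$; this is harmless because the off-diagonal rule already places $r_{i,t}=\ALeg{p_t}{p_i}=\ALeg 2{p_i}$, which is exactly the contribution of the factor $2$ of $d$ to $(D,dc)_{p_i}$.
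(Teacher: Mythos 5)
Your proof is correct: the reduction of $dc\in N_{K/\bQ}(\Kc)$ to local norm conditions via Hasse--Minkowski, and the identification of the surviving conditions at $p_1,\dots,p_{t-1}$ with the rows of $RX=C$ (including the diagonal bookkeeping $(D,p_i)_{p_i}=\ALeg{D/p_i^*}{p_i}$ via $(p_i^*,p_i)_{p_i}=1$, and the off-diagonal contribution $\ALeg{2}{p_i}$ when $2\mid d$), is sound. The skeleton is the same as the paper's, but your execution differs in how the redundant local conditions are eliminated. The paper phrases the norm condition as solvability of $dcz^2=x^2-Dy^2$, writes the local conditions at odd $p\mid D$ as Legendre-symbol congruences, and then does two explicit computations: an internal lemma establishing $2$-adic solvability by a direct Hilbert-symbol calculation $(cd,D)_2=\Leg{c}{d_0\tdd}$, and, when $p_t\ne 2$, a row-summation argument using quadratic reciprocity to show the condition at $p_t$ is implied by the other $t-1$. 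You replace both of these steps at once by the Hilbert product formula (plus the observation that $(D,dc)_2=1$ automatically when $D\equiv1\pmod 4$), which handles the cases $p_t=2$ and $p_t$ odd uniformly and shortens the proof considerably; the price is invoking reciprocity/Hasse norm machinery where the paper keeps everything at the level of explicit quadratic-reciprocity manipulations, which is arguably more self-contained and makes visible exactly which symbol identities are being used. Both arguments yield the same identification of the map as a bijection (a group isomorphism onto $\ker R$ when $c=1$), so the proposal stands as a valid, somewhat more streamlined alternative to the paper's proof.
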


\begin{proof} For $d\in \cD(K)$, note that $dc\in N_{K/\bQ}(\Kc)$ if and only if the following equation:
\begin{equation}\label{eq:gauss-solvbility-d}
dcz^2=x^2-Dy^2
\end{equation}
is solvable over $\bZ$. Hence   modulo equation (\ref{eq:gauss-solvbility-d}) by odd prime divisors of $D$, we see the solvability of equation (\ref{eq:gauss-solvbility-d}) implies that:
\begin{enumerate}
\item[(1)]: $\Leg{-cD/d}p=1$ for $p\Big|d$ odd prime divisor;
\item[(2)]: $\Leg {cd}p=1$ for $p\Big|\frac{-D}d$ odd prime divisor.
\end{enumerate}
But the inverse direction is also true, since we have
\begin{lemma}
 The local conditions (1),(2) also implies the solvability of the Diophantine equation (\ref{eq:gauss-solvbility-d}) under the hypothesis of Proposition \ref{gauss-Redei-c-version}.
\end{lemma}
\begin{proof} of the Lemma:

By Hasse-Minkovski principle, it suffices to show that (\ref{eq:gauss-solvbility-d}) is locally solvable everywhere. For $p \nmid  2cD$, this is obvious. For $p|c$, this local solvability follows from the hypothesis of Proposition \ref{gauss-Redei-c-version}. For odd $p|D$, this is just the local conditions (1),(2). So we just need  to show that (\ref{eq:gauss-solvbility-d}) is solvable at $\bQ_2$. Whence it suffices to show that the Hilbert symbol $(cd,D)_2=1$.

Let $d_0$ and $\tdd$ be the odd part of $d, \frac{-D}d$ respectively and $2^r=d/d_0$.
If we use $s$ to denotes the $2$-adic valuation of $D$, then \[(c,D)_2=(c,d_0\tdd)_2\cdot\Leg{-2^{s}}c =\Leg{-2^{s}}c\cdot \Leg{d_0\tdd}c\cdot \Leg c{d_0\tdd}=\Leg{D}c\cdot \Leg{c}{d_0\tdd}=\Leg{c}{d_0\tdd}\]
where we have used $\Leg{D}{c}=1$ which follows from  the hypothesis of Proposition \ref{gauss-Redei-c-version}.

Whence we are reduced to show $(d,D)_2=\Leg{c}{d_0\tdd}$. Note the conditions (1) and (2) imply that: $\Leg{2^{s-r} c\tdd}{d_0}=1, \quad \Leg {2^r cd_0}\tdd=1$.
From either $d_0$ or $-d_0$ is congruent to $1 \pmod 4$,   we have $(d_0,-d_0)_2=1$. Therefore
\begin{eqnarray*}
(d,D)_2&=&\brBig{2^rd_0, -2^sd_0\tdd}_2=\brbig{2^r,-2^sd_0\tdd}_2\cdot \brbig{d_0, -2^sd_0\tdd}_2  \\
&=&\brbig{2^r, d_0\tdd}_2\cdot \brbig{d_0, 2^s}_2\cdot \brlr{d_0, -d_0}_2\cdot (d_0, \tdd)_2   \\
&=&\Leg{2^r}{d_0\tdd}\cdot \Leg{2^s}{d_0}\cdot \Leg{d_0}\tdd\cdot \Leg\tdd{d_0}  \\
&=&\Leg{2^{r+s}\tdd}{d_0}\cdot \Leg{2^rd_0}\tdd = \Leg c{d_0\tdd}
\end{eqnarray*}
This completes the proof of the Lemma.
\end{proof}

Thus the expected isomorphism is reduced to show that the local conditions (1) and (2) are equivalent to $RX_d=C$. Let $X=(x_1,\cdots,x_t)^T\in \bF_2^t$ such that $RX=C$, then for any $1\le i\le t-1$ we have
\begin{equation}\label{eq:Redeiequiv}
\sum_{j\not=i} x_j\ALeg{p_j}{p_i}+x_i\ALeg{D/p_i^*}{p_i}=\ALeg c{p_i}
\end{equation}

If $x_i=0$, then $p_i\nmid d:=\prod_{l=1}^t p_l^{x_l}$, and equation (\ref{eq:Redeiequiv}) is equivalent to   $\Leg{cd}{p_i}=1$. If $x_i=1$, then $p_i \mid d$, the equation (\ref{eq:Redeiequiv}) is equivalent to $\Leg{-cD/d}{p_i}=1$.

For $i=t$: we may assume that $p_t\not=2$, whence $D=-p_1\cdots p_t\equiv 1(\mod 4)$. Then add all equations in (\ref{eq:Redeiequiv}) we get
\begin{equation}\label{eq:the case pt}
\sum_{i=1}^{t-1} \sum_{j\not=i} x_j \ALeg{p_j}{p_i}+\sum_{i=1}^{t-1} x_i \ALeg{D/p_i^*}{p_i}=\sum_{i=1}^{t-1} \ALeg c{p_i}
\end{equation}

Then for this equation we have its left hand side is
\begin{eqnarray*}
&&\sum_{i=1}^{t-1} \sum_{j=1,j\not=i}^t x_j \ALeg{p_j}{p_i} +
\sum_{i=1}^{t-1}x_i\sum_{j=1, j\not=i}^{t-1} \ALeg{p_j}{p_i}+\sum_{i=1}^{t-1} x_i \ALeg{p_t}{p_i}  \\
&=& x_t\sum_{i=1}^{t-1}\ALeg{p_t}{p_i}+\sum_{i=1}^{t-1} x_i \ALeg{p_t}{p_i}+
\sum_{i=1}^{t-1}x_i\sum_{j=1, j\not=i}^{t-1} \BrBig{ \ALeg{p_j}{p_i} +\ALeg{p_i}{p_j} }\\
&=&x_t\sum_{i=1}^{t-1}\ALeg{p_t}{p_i}+
\sum_{i=1}^{t-1} x_i \Brlr{ \ALeg{p_i}{p_t}+\sum_{j=1, j\not=i}^{t} \BrBig{ \ALeg{p_j}{p_i} +\ALeg{p_i}{p_j} } }
\end{eqnarray*}
If we use  $p_1\cdots \hp_i\cdots p_t$ to denote the product $p_1\cdots  p_t$ with $p_i$ omitted, then the left hand side of equation (\ref{eq:the case pt}) equals
\[ x_t \ALeg{p_t}{p_1\cdots p_{t-1}}+ \sum_{i=1}^{t-1} x_i \BrBig{\ALeg{p_i}{p_t}+\ALeg{p_i}{p_1\cdots \hp_i\cdots p_t}+\ALeg{p_1\cdots \hp_i\cdots p_t}{p_i}}
=x_t\ALeg{D/p_t^*}{p_t}+\sum_{i=1}^{t-1}x_i\ALeg{p_i}{p_t}\]
where for the last equation we   used the quadratic reciprocity law and $p_1\cdots p_t\equiv 3(\mod 4)$, as then one of $p_i$ and $p_1\cdots \hp_i\cdots p_t$ must be congruent to $1 \pmod 4$. Since $\Leg Dc=1$, then
$$1=\Leg{-p_1\cdots p_t}c=\Leg{-1}c\cdot \Leg{p_1\cdots p_t}c=\Leg{c}{p_1\cdots p_t}$$

Hence the right hand side of the equation (\ref{eq:the case pt}) equals $\ALeg c{p_t}$, thus as what we do for equation (\ref{eq:Redeiequiv}) we get the local solvability is also true for $p_t$.

Hence, the local conditions on $d$ is satisfied if and only if $RX_d=C$. This finishes the proof.
\end{proof}

\subsection{$8$-rank}

In this subsection, we will use classical Gauss genus theory and $4$-rank to derive $8$-rank $h_8(K)=\rank_{\bF_2} \cA[2]\cap 4\cA$ of $K=\bQ(\sqrt{-n})$: this is equivalent to determine those elements in $\cA[2]\cap 2\cA$ also lie  in $4\cA$.

In this subsection, for the quadratic field, we always assume that  $n=p_1\cdots p_k$ is a  positive square-free integer with all prime factors $p_i\equiv 1  \pmod 4$.

For any $2^rd\in \cD(K)\cap N_{K/\bQ}(\Kc)$ with $d|n$  and $r=0$ or $1$ such that $\theta(2^rd)$ is non-trivial, then from Proposition \ref{gauss}, \ref{gauss-Redei} we know that the following Diophantine equation
\begin{equation}\label{eq:z2=dx2+d'y2}
2^rz^2=dx^2+\frac nd y^2
\end{equation}
is solvable over $\bZ$, and let $(a,b,c)=(x,y,z)$ be a positive primitive integer solution,  then $c$ is odd, as else $c$ is even, then $a,b$ must be odd, since $(a,b,c)$ is a primitive solution, then modulo equation (\ref{eq:z2=dx2+d'y2}) by $4$ we get: $0\equiv d+\frac nd \equiv 2 \pmod 4$, since every divisor of $n$ congruent to $1 \pmod 4$, hence we get a contradiction, which implies that $c$ is odd.

Then $[\fd]=[\fc]^2$ where $\fd=\brlr{2^rd,\alpha_0}$ and $\fc$ is a certain integral ideal over $c$:  From $(c,D)=1$ we know that any prime $p|c$ is unramified in $K$, moreover $p$ splits in $K$, as else $(p)$ is   a prime ideal  in $K$, by prime ideal decomposition we know either $(p)\Big|(\alpha)$ or $(p)\Big|(\overline\alpha)$ since we have $2^rdc^2=N\alpha$ with $\alpha=da+b\sqrt{D'}\in \cO_K$, but either $(p)\Big|(\alpha)$ or $(p)\Big|(\overline\alpha)$  implies that $p|(a,b,c)$ which is impossible since $(a,b,c)$ is a primitive solution. Whence we get any prime divisor of $c$ splits in $K$. Then $(p)=\fp\bar\fp \Big| N(\alpha)$, similarly exactly one of $\fp, \bar\fp$ divide $(\alpha)$, then $(\alpha)=\fd\prod_{p|c}\fp^{2 v_p(c)}$. Therefore $[\fd]=[\fc]^2$ with $\fc$ an integral ideal over $c$.

Now we study when $[\fd]\in 4\cA$:  we note that $[\fd]=[\fc]^2\in 4\cA$ if and only if there is a  $[\fm]\in\cA[2]$ with norm $m\in \cD(K) $ such that $[\fc]+[\fm] \in 2\cA$,  which is equivalent to \[mcZ^2=X^2-DY^2\] is solvable over $\bZ$ by Proposition \ref{gauss-cla}.  Modulo the equation (\ref{eq:z2=dx2+d'y2}) by any prime divisor $p$ of $c$, we get $\Leg Dp=1$. And $c$ is odd, then by Proposition \ref{gauss-Redei-c-version}  we know that $[\fd]\in 4\cA$ if and only if $C\in \Im R$ where
$C=\brlr{\ALeg c{p_1},\cdots, \ALeg c{p_k}}^T$.

\begin{rem}\label{rem1}
We remark that for a given $d$ as above and another primitive solution $(a',b',c')$ of the equation (\ref{eq:z2=dx2+d'y2}) there is another $C'$, but we may have $C\not=C'$. For example $K=\bQ(\sqrt{-13*17})$, then $R=\left(
                             \begin{array}{ccc}
                               0 & 0 & 1 \\
                               0 & 0 & 0 \\
                             \end{array}
                           \right)
$, whence $h_4(n)=1$ and $13$ represents the only non-trivial element in $\cA[2]\cap 2\cA$, then for the equation $z^2=13x^2+17y^2$ we have two primitive solution $(x,y,z)=(1,2,9)$ and $(4,1,15)$, and correspondingly  $C=(0,0)^T, C'=(1,0)^T$, they are not equal but both lies in $\Im R$, hence $h_8(n)=1$. In fact, by Sage software we have the ideal class group of $K$ is isomorphic to $\bZ/8\bZ\oplus \bZ/2\bZ$.
\end{rem}

Now if $d_1,d_2\in \cD(K)\cap N_{K/\bQ}(\Kc)$ and $C_1, C_2$ are  given as above, we know $d:=d_1\odot d_2 \in \cD(K)\cap N_{K/\bQ}(\Kc) $. Then we have a natural question: does $[(d,\alpha_0)]\in 4\cA$ have any relation with $C_1, C_2$? This is given by:

\begin{prop}\label{prop-C=C1+C2}
Let $K=\bQ(\sqrt{-n})$ be an imaginary quadratic number field with $n=p_1\cdots p_k$   a square-free positive integer such that all $p_i\equiv 1 \pmod 4$,  and $2^{r_1}d_1, 2^{r_2}d_2\in \cD(K)\cap N_{K/\bQ}(\Kc)$ with $d_i|n$ and $r_i\in \Brbig{0,1}$. Let  $2^rd=2^{r_1}d_1\odot 2^{r_2}d_2$ with $d|n$. Assume that $(a_i, b_i,c_i)=(x,y,z)$  be positive primitive integer solution of
\begin{equation}\label{eq:higherRedei-di-norm}
 2^{r_i}z^2=d_ix^2+\frac n{d_i} y^2
\end{equation}
and  $C_i:=\brlr{\ALeg{c_i}{p_1}, \cdots, \ALeg{c_i}{p_k}}^T, i=1,2$. If we denote $\fd=(2^rd,\alpha_0)$ and $R$ the Redei matrix of $K$, then
$ [\fd]\in 4\cA$ if and only if $ C_1+C_2\in \Im R$.
\end{prop}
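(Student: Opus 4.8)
The plan is to reduce the statement to the computation carried out for a single divisor in the paragraph preceding Remark \ref{rem1}, applied to the product ideal $\fc_1\fc_2$ rather than to a primitive solution of (\ref{eq:higherRedei-di-norm}) associated to $d$ itself. First I would record what that reasoning, applied verbatim to each of $2^{r_1}d_1$ and $2^{r_2}d_2$, gives: for $i=1,2$ the integer $c_i$ is odd and coprime to $D$, every prime divisor of $c_i$ splits in $K$, and there is an integral ideal $\fc_i=\prod_{p\mid c_i}\fp^{v_p(c_i)}$ with $N_{K/\bQ}\fc_i=c_i$ and $[\fd_i]=2[\fc_i]$, where $\fd_i=(2^{r_i}d_i,\alpha_0)$. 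Since $\theta$ is a group homomorphism (Proposition \ref{gauss}) and $2^rd=2^{r_1}d_1\odot 2^{r_2}d_2$ lies in $\cD(K)\cap N_{K/\bQ}(\Kc)$, I obtain
\[
[\fd]=\theta(2^rd)=\theta(2^{r_1}d_1)+\theta(2^{r_2}d_2)=[\fd_1]+[\fd_2]=2\brbig{[\fc_1]+[\fc_2]}=2[\fc_1\fc_2].
\]
Put $c:=c_1c_2$ and $\fc:=\fc_1\fc_2$, so that $N_{K/\bQ}\fc=c$ and $c$ is odd, coprime to $D$, with every prime divisor split in $K$.

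Now I would repeat the argument of the paragraph before Remark \ref{rem1}, with $c$ and $\fc$ playing the roles of the $c$ and $\fc$ appearing there. For any abelian group one has $2x\in 4\cA$ if and only if $x\in 2\cA+\cA[2]$; applied to $x=[\fc]$ this shows $[\fd]=2[\fc]\in 4\cA$ exactly when there is some $[\fm]\in\cA[2]$ with $[\fc]+[\fm]\in 2\cA$. By Proposition \ref{gauss-cla}(1) such an $\fm$ can be taken of the form $(m,\alpha_0)$ with $m$ a positive square-free divisor of $D$, and then, as in the cited paragraph (via Proposition \ref{gauss-cla}(2)), $[\fc]+[\fm]\in 2\cA$ is equivalent to the solvability over $\bZ$ of $mc\,Z^2=X^2-DY^2$, i.e.\ to $mc\in N_{K/\bQ}(\Kc)$. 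Reducing this equation modulo each prime $p\mid c$ forces $\Leg Dp=1$, which we already know; hence, as $c$ is odd and coprime to $D$, Proposition \ref{gauss-Redei-c-version} applies with parameter $c=c_1c_2$ and tells us that such an $m$ exists precisely when the system $RX=C$ with $C=\brlr{\ALeg{c}{p_1},\cdots,\ALeg{c}{p_{t-1}}}^T$ is solvable, that is, when $C\in\Im R$. Finally, since all $p_i\equiv 1\pmod 4$ forces $-n\equiv 3\pmod 4$, we have $D=-4n$, whence $t=k+1$ and $p_1,\dots,p_{t-1}=p_1,\dots,p_k$; and since the additive Legendre symbol is additive in its numerator, $\ALeg{c_1c_2}{p_j}=\ALeg{c_1}{p_j}+\ALeg{c_2}{p_j}$, so $C=C_1+C_2$. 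Combining these steps yields $[\fd]\in 4\cA\iff C_1+C_2\in\Im R$.

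I do not anticipate a serious obstacle; the content is bookkeeping, and the step requiring the most attention is the very first one — namely, checking that the single-divisor argument before Remark \ref{rem1} never used that the integer $c$ there arose from a primitive solution of one particular equation, but only that $[\fd]$ is twice the class of an ideal whose norm is odd, coprime to $D$, and split in $K$; this is exactly what lets it transport unchanged to the ideal $\fc_1\fc_2$. Two minor points to be careful with are: the elementary equivalence $2x\in 4\cA\iff x\in 2\cA+\cA[2]$, used to pass from ``$[\fd]\in 4\cA$'' to the existence of $\fm$; and the fact that Proposition \ref{gauss-Redei-c-version} does not require its parameter $c$ to be square-free — and in any case solvability of $mcZ^2=X^2-DY^2$ depends only on $c$ modulo squares — so the possibly non-square-free integer $c_1c_2$ poses no problem.
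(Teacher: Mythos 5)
Your argument is correct, but it follows a genuinely different route from the paper. The paper proves the proposition constructively: it composes the two solutions via $\alpha_i=d_ia_i+b_i\sqrt{-n}$ and the identity $2^{r_1+r_2}d_1d_2(c_1c_2)^2=N(\alpha_1\alpha_2)=a^2+nb^2$, then carries out a careful gcd analysis (the claim $p^{2s_p}\,\|\,(c_1c_2,a,b)$) and a $2$-adic parity analysis to extract a \emph{primitive} solution of $2^rz^2=dx^2+\tfrac nd y^2$ with $z=c_1c_2/c_0^2$, and only then invokes Proposition \ref{gauss-Redei-c-version}; since $c_0^2$ is a square, the associated vector is $C_1+C_2$. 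You instead stay entirely at the level of ideal classes: $\theta$ is a homomorphism, $[\fd_i]=2[\fc_i]$ with $N\fc_i=c_i$, hence $[\fd]=2[\fc_1\fc_2]$, and you rerun the equivalences of the paragraph before Remark \ref{rem1} with parameter $c=c_1c_2$, using additivity of $\ALeg{\cdot}{p_j}$ in the numerator. This is shorter and avoids all the primitivity bookkeeping; it also makes transparent why membership of $C$ in $\Im R$ is insensitive to choices. Two remarks: first, the converse half of the step ``$[\fc_1\fc_2\fm]\in 2\cA \iff mc_1c_2\in N_{K/\bQ}(\Kc)$'' is being transported from the paper's earlier paragraph, and its choice-independence rests on the observation that $[\fp]\equiv[\bar\fp]\pmod{2\cA}$ for split $p$ (so the class of $\fc_1\fc_2$ modulo $2\cA$ depends only on the norm $c_1c_2$); this deserves one explicit sentence, though it is at the same level of detail as the paper's own assertion. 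Second, what the paper's computational route buys is the explicit composed solution $(a',b',c')$ itself, which is reused later (the choice of $(a_k,b_k,c_k)$ in \S3.3 and the Cassels pairing computations), so your streamlining of this proposition does not make that construction dispensable elsewhere.
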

\begin{proof} Let $\alpha_i=d_ia_i+b_i\sqrt{-n}\in \cO_K$, then $2^{r_i}d_ic_i^2=\alpha_i\overline\alpha_i$, thus we have
\begin{equation}\label{eq:norm-alpha-i}
2^{r_1+r_2}d_1d_2(c_1c_2)^2=N_{K/\bQ}(\alpha_1\alpha_2)=a^2+nb^2
\end{equation} with $a=d_1d_2a_1a_2-b_1b_2n, \;
b=b_1d_2a_2+b_2d_1a_1$. Then we want to  derive a primitive solution of $2^rz^2=dx^2+\frac ndy^2$ with $z=\frac{c_1c_2}{c_0^2}$ from  equation (\ref{eq:higherRedei-di-norm}), then this will finish our proof  by Proposition \ref{gauss-Redei-c-version}. Before   proceeding further, we introduce a notation: for a prime $p$ and an integer $m$, the notation $p^r||m$  means that $p^r|m$ but $p^{r+1} \nmid  m$. We claim that:

\emph{If a prime $p|\brlr{c_1c_2,a,b}$ then $p^{2s_p}||(c_1c_2,a,b)$, where $s_p:=\min\Brbig{v_p(c_1), v_p(c_2)}$. }\\

Since $p|c_1c_2$ then $p$ is odd and we may assume that $p|c_1$, hence
\begin{equation}\label{eq:p-2-divide dc}
p^2\Big|2^{r_1}d_1c_1^2=\alpha_1\overline\alpha_1
\end{equation}
but $p \nmid   \alpha_1$ by the solution $(a_1,b_1,c_1)$ is primitive and $c_1$ is coprime with $2n$. Similar argument as the paragraph in equation (\ref{eq:z2=dx2+d'y2}) we get $p$ splits in $K$ with $(p)=\fp\bar\fp$ and $\fp\not=\bar\fp$, and exactly one of $\fp, \bar \fp$ divide the principal ideal $(\alpha_1)$, we may assume that $\fp|(\alpha_1)$, so $\bar\fp\nmid (\alpha_1)$. Then from ideal version of equation (\ref{eq:p-2-divide dc}) we know that $\fp^{2 v_p(c_1)}||(\alpha_1)$. But $p$ also divides $a,b$, hence $p|\alpha_1\alpha_2$, whence we have\[\fp\bar\fp | (\alpha_1)(\alpha_2)\]
since $\bar\fp \nmid   (\alpha_1)$, from this we have $\bar\fp|(\alpha_2)$, similarly we have $\bar\fp^{2 v_p(c_2)}||(\alpha_2)$ but $\fp \nmid  (\alpha_2)$. Hence by the prime ideal decomposition of $(\alpha_1\alpha_2)$, we know their $\fp, \bar\fp$ components are $\fp^{2 v_p(c_1)}$ and $\bar\fp^{2v_p(c_2)}$ respectively. From this we get $p^{2s_p}||(c_1c_2,a,b)$:

From this claim we know there is a positive integer $c_0$ such that $(c_1c_2,a,b)=c_0^2$. By the definition of $a,b$ we can easily get $(d_1,d_2)d|a, (d_1,d_2)|b$. Now we deal with $2$-part of $a,b$.

If $r_1=r_2=0$, then from equation (\ref{eq:higherRedei-di-norm}) we know exact one of $a_i, b_i$ is even for $i=1,2$. Since
\begin{equation}\label{eq:parity of ab}
a\equiv a_1a_2-b_1b_2(\mod 2), \quad b\equiv a_1b_2+a_2b_1 (\mod 2)
\end{equation}
then exactly one of $a,b$ is even.

If exact one of $r_1,r_2$ is $0$, then exact one of $a_1, a_2, b_1, b_2$ is even, then $a,b$ are odd by equation (\ref{eq:parity of ab}).

If $r_1=r_2=1$, then $r=0$, and all of $a_1, a_2, b_1, b_2$ are odd, hence from equation (\ref{eq:parity of ab}) we get $a,b$ are even. If $4|(a,b)$, then modulo $a,b$ by $4$ respectively we have
\[a_1a_2\equiv b_1b_2(\mod 4), \quad a_1b_2\equiv -a_2b_1(\mod 4)\]
Then we get
\[a_2b_2\equiv a_1^2a_2b_2\equiv -b_1^2a_2b_2\equiv -a_2b_2(\mod 4)\]
Then $2|a_2b_2$ which is impossible.

Thus in any case  we have $2^{r_1r_2}||(a,b)$. So if we denote\[a'=\frac{a}{c_0^22^{r_1r_2}\cdot d(d_1,d_2)}, \quad b'=\frac b{c_0^22^{r_1r_2}\cdot(d_1,d_2)}, \quad c'=\frac{c_1c_2}{c_0^2}\]
then from equation (\ref{eq:norm-alpha-i}) we get $(x,y,z)=(a',b',c')$ is a primitive positive integer solution of \[2^rz^2=dx^2+\frac{n}dy^2\]
Whence the proof is finished by Proposition  \ref{gauss-Redei-c-version}.
\end{proof}
\subsection{Higher Redei matrix}

In this subsection, we will define a higher Redei matrix $R^*$, which is analogous to Redei matrix, for example we also have a $2$ to $1$ epimorphism from $\ker(R^*)$ to $\cA[2]\cap 4\cA$.

In this subsection ,we let $K=\bQ(\sqrt{-n})$ be an imaginary quadratic number field, where $n=d_1\cdots d_k$ is  a square-free positive integer with  all prime factors congruent to $1 \pmod 8$  such that
\begin{enumerate}
\item[(1)] $h_4(d_i)=1,  1\le i\le k$;
\item[(2)] $\Leg{p_{i_1}}{p_{i_2}}=1$ for any prime divisor $p_{i_j}$ of $d_{i_j}$ with $i_1\not=i_2$.
\end{enumerate}

Since all prime divisors of $n$ are congruent to $1 \pmod 8$, then the Redei matrix $R$ is \begin{eqnarray*}
R=\brBig{A\; 0}, \quad A=\diag(A_{d_1}, \cdots, A_{d_k})
\end{eqnarray*}
as $n$ satisfying (2), where $A_{d_i}$ denotes the corresponding $A$-matrix in defining the corresponding Monsky matrix of $d_i$. From $h_4(d_i)=1$ and all prime divisor of $d_i$ are congruent to $1 \pmod 8$ we get $\rank A_{d_i}=\omega(d_i)-1$ by Proposition \ref{gauss} and \ref{gauss-Redei}, where $\omega(d_i)$ denotes the number of prime divisors of $d_i$.

If we define $d_{k+1}:=2$, then under $\theta$,  the image of $d_1,\cdots,d_{k-1},d_{k+1}$  form a base of $\cA_n[2]\cap 2\cA_n$ by Proposition \ref{gauss} and \ref{gauss-Redei}. Let $\fd_i=(d_i,\alpha_0)$ for $1\le i\le k+1$. Then the problem of determining whether  $[\fd_i]\in 4\cA$ is reduced to determine whether $C_i\in \Im R$ according to the argument before Remark \ref{rem1} of \S3.2. Here $C_i$ is defined to be $\brlr{\ALeg{c_i}{p_1},\cdots, \ALeg{c_i}{p_{\omega(n)}}}^T$, where  $(a_i,b_i,c_i)=(x,y,z)$ is  a fixed primitive positive integer solution of
\[z^2=d_i x^2+\frac{n}{d_i} y^2\]
for $1\le i\le k-1$, and $(a_k,b_k,c_k)$ is the primitive solution derived similarly as in Proposition \ref{prop-C=C1+C2} from those $(a_i,b_i,c_i)$ chosen above since $\theta(d_k)=\prod_{i=1}^{k-1}\theta(d_i)$, while  $(a_{k+1},b_{k+1},c_{k+1})=(x,y,z)$ is a fixed  primitive positive integer solution of\[2z^2=x^2+ny^2\]

Since a vector $y\in \bF_2^{\omega(n)}$ lies in $\Im R$ if and only if
$$(0,\cdots,0,\uset{\omega(d_i)}{\underbrace{1,\cdots,1}},0\cdots,0)y=0$$
Then from this and Proposition \ref{gauss-Redei-c-version} we know $[\fd_i]=[(d_i,\sqrt{-n})]\in 4\cA$ if and only if $\ALeg{c_i}{d_j}=1$ for any $1\le j\le k$. Hence by Proposition \ref{prop-C=C1+C2} we get that any $d=\prod_1^{k+1} d_i^{x_i}$ contributes to $h_8(n)$ if and only if $\sum_{i=1}^{k+1} x_i\ALeg{c_i}{d_j}=0$ for $1\le j\le k$.  This motivates us to define a higher Redei matrix $R^*$: it is a $k\times (k+1)$ matrix over $\bF_2$  given by
\[R^*=\brBig{A^*\Big|\; B^*} \]
with $A^*=(a^*_{ij})_{k\times k},\; a^*_{ij}=\ALeg{c_j}{d_i}$ and $ B^*=\brBig{\ALeg{c_{k+1}}{d_1},\cdots, \ALeg{c_{k+1}}{d_k}}^T$.

Similar to the Redei matrix, the sum of every row elements of $A^*$ is zero  by the choice of $c_k$ and Proposition \ref{prop-C=C1+C2}. And  we also get a $2$ to $1$  epimorphism as in Redei matrix:
\begin{equation}\label{eq:R*-8rank}
\BrBig{ x\in \bF_2^{k+1} \Big| R^*x=0}\to \cA[2]\cap 4\cA,\quad  x\mapsto \theta\brBig{\prod_{i=1}^{k+1} d_i^{x_i}}
\end{equation}
follows from above argument. Thus similar  as $4$-rank and Redei matrix  we get
\[h_8(K)=k-\rank R^*\]
\begin{rem}
It seems as if $A^*$ depends on the choice of $(a_i,b_i,c_i)$, but from Cassels pairing we will see that $A^*$ doesn't depend on the choice, so $A^*$ is intrinsic.
\end{rem}

\section{Proof of Main Theorems}
In this section, we will prove our main theorems according to the strategy explained in the introduction.

\subsection{ First Main Theorem }

For a positive square-free integer $n=p_1\cdots p_k\equiv 1 \pmod 8$ with all prime factors congruent to $1 \pmod 4$, the corresponding Monsky matrix of $\Sel_2(\En)$ and Redei matrix of $\bQ(\sqrt{-n})$ are of the form
\[M=\left(
      \begin{array}{cc}
        A+D_2 & D_2 \\
        D_2 & A+D_2 \\
      \end{array}
    \right), \quad R=\brBig{ A \Big| B }
\] where $B=(b_1,\cdots,b_k)^T$ with $b_i= \ALeg2{p_i}$ and $A$ is a $k\times k$ symmetric matrix over $\bF_2$ with all row sum $0$, hence all $A$'s column sum are also $0$. And we have $h_4(n)=k-\rank R$.

According to the strategy in the introduction, we begin with the first step: relate $s_2(n)$ with $h_4(n)$ via Monsky matrix and Redei matrix.

\begin{lemma}
For a positive square-free integer $n=p_1\cdots p_k\equiv 1\pmod 8 $ with all $p_i\equiv1 \pmod 4$,
then $s_2(n)=2$ if and only if $  h_4(n)=1$. And if this is satisfied,  then the corresponding pure-$2$ Selmer group $\Sel_2(\En/\bQ)/\En(\bQ)[2]$ is generated by
\[ {(n,n,1)}, \quad { (d,d^*,dd^*)} \]
where $ d^*=d $ if $\rank A=k-2$ and else $d^*=n/d$ with  $d=\prod p_i^{x_i}$, here $x=(x_1,\cdots,x_k)^T\in \bF_2^k$ is a non-trivial solution of $Ax=0$ with $x\not=x_0$ if $\rank A=k-2$ and else $x$ is a solution of $Ax=B$, here $x_0=(1,\cdots,1)^T$.
\end{lemma}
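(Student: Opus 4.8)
The plan is to compute $\dimt\ker M$ — which equals $s_2(n)$ by Monsky's description in \S2.1 — directly in terms of the symmetric matrix $A$ and the diagonal matrix $D_2$, and then to read off the two generators by unwinding the $\Sel_2(\En)$-parametrization. Throughout I write a vector in the domain of $M$ as $\binom{x}{y}$ with $x,y\in\bF_2^k$, and I set $x_0=(1,\dots,1)^T$; note $x_0\in\ker A$ because the rows of $A$ sum to $0$, and that $B=D_2x_0$.

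The first and main step is a block reduction. Since $M=\Ma{A+D_2}{D_2}{D_2}{A+D_2}$, the system $Mu=0$ reads $(A+D_2)x+D_2y=0$ and $D_2x+(A+D_2)y=0$; adding the two equations gives $A(x+y)=0$, and the first one becomes $Ax+D_2(x+y)=0$. After the invertible change of variables $s=x+y$, the kernel of $M$ is carried to $\{(x,s):As=0,\;Ax=D_2s\}$. Projecting onto the $s$-coordinate has image $W:=\{s\in\ker A:D_2s\in\Im A\}$ with each fibre a coset of $\ker A$, so $\dimt\ker M=\dimt W+\dimt\ker A$. Because $A$ is symmetric over $\bF_2$ we have $\Im A=(\ker A)^\perp$, so $W$ is exactly the radical of the bilinear form $(v,w)\mapsto v^TD_2w$ restricted to $\ker A$; in particular $x_0\in W\iff D_2x_0=B\in\Im A$. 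Finally $\rank R=\rank(A\mid B)=\rank A+[B\notin\Im A]$, so $h_4(n)=k-\rank R=\dimt\ker A-[B\notin\Im A]$.

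With these identities in hand, $s_2(n)=2\iff h_4(n)=1$ is a short case analysis based on $\dimt\ker A\ge 1$. If $\dimt\ker M=2$ then $\dimt\ker A\le 2$ (since $\dimt\ker M\ge\dimt\ker A$): when $\dimt\ker A=1$ we get $\dimt W=1$, so $W=\ker A\ni x_0$, hence $B\in\Im A$ and $h_4(n)=1$; when $\dimt\ker A=2$ we get $\dimt W=0$, so $x_0\notin W$, hence $B\notin\Im A$ and $h_4(n)=2-1=1$. Conversely, if $h_4(n)=1$ there are two possibilities. If $\dimt\ker A=1$ and $B\in\Im A$, then $x_0\in W\subseteq\ker A$ forces $W=\ker A$ and $\dimt\ker M=1+1=2$. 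If $\dimt\ker A=2$ and $B\notin\Im A$, then $x_0\notin W$, so $W\ne\ker A$ and $\dimt W\le 1$, whence $\dimt\ker M=\dimt W+2\in\{2,3\}$; since $\dimt\ker M$ is even (Monsky, as $n\equiv 1\pmod 8$) it must equal $2$.

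In either surviving configuration the block reduction describes $\ker M$ completely. When $\rank A=k-2$ we have $W=\{0\}$, so $\ker M=\{(v,v):v\in\ker A\}$ and $\ker A=\langle x_0,x\rangle$ for any non-trivial $x\ne x_0$; when $\rank A=k-1$ we have $\ker A=\langle x_0\rangle$ and $\ker M=\langle(x_0,x_0),(x,x+x_0)\rangle$ for a fixed solution $x$ of $Ax=B$. Pushing these vectors through the $\Sel_2$-parametrization: a kernel vector $(v,w)$ is the triple $(d_1,d_2,d_3)$ with $v_{p_i}(d_1)=v_i$, $v_{p_i}(d_2)=w_i$, and $d_3$ the square-free part of $d_1d_2$ (this coordinate is forced by $d_1d_2d_3\in\bQ^{\times2}$, not free). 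Thus $(x_0,x_0)\leftrightarrow(n,n,1)$; $(x,x)\leftrightarrow(d,d,1)=(d,d^*,dd^*)$ with $d=\prod p_i^{x_i}$ and $d^*=d$ in the case $\rank A=k-2$; and $(x,x+x_0)\leftrightarrow(d,n/d,n)=(d,d^*,dd^*)$ with $d^*=n/d$, using $v_{p_i}(n/d)\equiv 1+x_i\pmod 2$, in the case $\rank A=k-1$ — exactly the asserted generators. The only genuine obstacle is the structural identity $\dimt\ker M=\dimt W+\dimt\ker A$ together with the observation that $W$ is the radical of a small symmetric $\bF_2$-form; once that is set up, the rest is the elementary case check (using only $x_0\in\ker A$ and Monsky's parity) and careful bookkeeping in the $\Sel_2$-model.
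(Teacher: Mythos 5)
Your proposal is correct, and it shares the paper's skeleton: the substitution $s=y+z$ reducing $Mu=0$ to the pair $As=0$, $Ay=D_2s$ (this is precisely the paper's intermediate matrix $M'=\Ma{A}{D_2}{}{A}$), the appeal to Monsky's evenness of $\rank M$ for $n\equiv1\pmod 8$, and an essentially identical case analysis on $\rank A$ to read off the generators $(n,n,1)$ and $(d,d^*,dd^*)$, with the third coordinate forced by $d_1d_2d_3\in\bQ^{\times2}$. Where you genuinely diverge is the middle step connecting $\rank M$ with $\rank R$: the paper continues with explicit row/column surgery, turning $M'$ into a block matrix $M''$ built from $R$ with one row deleted and a transposed deleted-row block, and then compares ranks in both directions using the fact that the only row relation of $R$ is that all rows sum to zero; you instead count $\dimt\ker M=\dimt\ker A+\dimt W$ via the fibration over $W=\{s\in\ker A: D_2s\in\Im A\}$, identify $x_0\in W$ with $B=D_2x_0\in\Im A$, use the symmetry of $A$ (so $\Im A=(\ker A)^{\perp}$), and conclude with $\rank R=\rank A+[B\notin\Im A]$, i.e.\ $h_4(n)=\dimt\ker A-[B\notin\Im A]$. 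Your route is more structural and avoids the $M''$ construction altogether, at the cost of the small orthogonality lemma --- which, incidentally, the paper itself invokes later in the proof of Theorem \ref{mainthm1} when it characterizes $\Im A$ by $x_0^Ty=x^Ty=0$. A small bonus of your setup that you did not exploit: in the case $\dimt\ker A=2$, $B\notin\Im A$, you could dispense with Monsky's parity result, since $x_0^TD_2x_0=\ALeg{2}{n}=0$ forces $x^TD_2x_0=\ALeg{2}{d}=1$, and a direct computation with the Gram matrix of your form on $\ker A$ then shows the radical $W$ is already zero; as written, your use of the parity statement from \S2.1 is also perfectly legitimate and mirrors the paper.
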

\begin{proof} By properties of Monsky matrix and Redei matrix, this is equivalent to show that the rank of $M$ is $2k-2$ if and only if that of $R$ is $k-1$.

If we denote $R_i$ to be the $(k-1)\times (k+1)$ matrix obtained from $R$ by deleting the $i$-th row vector, then by elementary linear transforms, the following matrices have same rank over $\bF_2$:
\[M, \quad  M':=\Ma A{D_2}{}A, \quad M'':= \Ma{R_k}*{}{R_1^T}\]
where $M''$ derives from $M'$ by: adding the first $k-1$ row to the $k$-th row, then adding the last $k-1$ column to the $(k+1)$-th column, and then moving the $k$-th  row as the last row.

If $\rank R=k-1$:  from $A$ is symmetric and $\sum_{i=1}^k b_i=\ALeg2n$ with $n\equiv 1(\mod 8)$  we get the sum of all row vectors of $A$ and $B$ are $0$, hence the only non-trivial linear dependence of $R$ is the sum of all row vectors is $0$, whence we get rank of $R_i$ is also $k-1$. From this we know that $2k-2\le \rank M''\le 2k-1$, but the rank of $M$ and $M''$ are equal and rank $M$ must be even since $n\equiv 1 \pmod 8$ by the last line of \S2.1, hence $\rank M=2k-2$.

If $\rank M=2k-2$, then so is $\rank M''$, since $R_k$ has only $k-1$ rows, hence from the rank of $M''$ we infers that $\rank R_1^T\ge k-1$, but we know that $\rank R_1\le \rank R\le k-1$ since the sum of all row vectors of $R$ are $0$. This force $R$ has rank $k-1$. So $s_2(n)=2$ if and only if $h_4(n)=1$.\\

Now assume that $h_4(n)=1$:   we suffice to find the corresponding $4$ representatives of $\Sel_2(\En)/\En(\bQ)[2]$. By Monsky matrix, we reduce to find $y,z \in \bF_2^k$  such that
$$M\left(
  \begin{array}{c}
    y\\z
  \end{array}
\right)=
\left(
           \begin{array}{cc}
             A+D2 & D_2 \\
             D_2 & A+D_2 \\
           \end{array}
         \right)\left(
  \begin{array}{c}
    y\\z
  \end{array}
\right)=
0$$
From this we have: $Ay+D_2(y+z)=0, A(y+z)=0$, thus $y+z\in\ker A$. Now we divide into two cases according to $A$'s rank:

If $\rank A=k-2$, and if $y+z=0$ then we have $Ay=0$, thus in this case $y=z$ with $Ay=0$ are the $4$ solutions. Hence the following $2$ elements generates the pure $2$-Selmer group $\Sel_2(\En)/\En(\bQ)[2]$:
\[(d,d,1), \quad (n,n,1)\]

If $\rank A=k-1$, thus $y+z=0$ or $x_0:=(1,\cdots,1)^T$. If $y+z=0$, then we have two solutions $y=z=0$ or $ y=z=x_0$; if $y+z=x_0$, then $Ay=D_2\cdot x_0=B$, and $B$ is indeed in $\Im A$, as $\rank(A|B)=\rank A=k-1$ thus $\Brlr{y,z}=\Brlr{x,x_0-x}$ with $Ax=B$. Hence the following $2$ elements generates $\Sel_2(\En)/\En(\bQ)[2]$:
\[(d,d^*,dd^*), \quad (n,n,1)\]

Hence the Lemma is proved.
\end{proof}

Now we begin to prove Theorem \ref{mainthm1}:
\begin{proof} From Lemma above, we get $ s_2(n)=2$ if and only if $h_4(n)$=1. Now be begin the second step. From the following exact sequence:
$$0\to \En[2]\to \En[4]\oset 2\to \En[2]\to0$$ we have the corresponding long exact sequence:
\[0\to \En[2](\bQ)/2\En[4](\bQ)\to \Sel_2(\En) \to \Sel_4(\En)\to \Sel_2(\En)\]
And if we denote $\Im \Sel_4(\En)$ to be the image of $\Sel_4(\En)$ in the last map of above long exact sequence, then we have $\rank_\bZ \En(\bQ)=0, \; \Sha(\En/\bQ)[2^\infty]\simeq \brbig{\bZ/2\bZ}^2$ if and only if $\#\Sel_2(\En)=\#\Sel_4(\En)$, which is equivalent to $\#\Im\Sel_4(\En)=\#\En(\bQ)[2]$ by above long exact sequence. Then  by Cassels pairing, this holds   if and only if the Cassels pairing on the pure $2$-Selmer group $\Sel_2(\En)/\En(\bQ)[2]$  is non-degenerate. Hence if we denote $\Lambda=(d,d^*,dd^*), \Lambda'=(-1,1,-1)$, then we reduce to show when
$$\Ba\Lambda{\Lambda'}=-1$$ note here $\Lambda'\in (n,n,1)+\En(\bQ)[2]$. \\

Now we are in the third step to  compute $\Ba\Lambda{\Lambda'}$ according to $\rank A$:

(I): The rank of $A$ is $k-2$: we claim that in this case
$$d\equiv 5 \pmod 8$$

From  $h_4(n)=1 $ we get $\rank R=\rank(A|B)=k-1$, but $\rank A=k-2$, this shows that  $B\not\in\Im A$. Since $A$ is symmetric and $\ker A$ is generated by $x_0, x$ which are defined in above Lemma, a vector $y\in\bF_2^k$ lies in $\Im A$ if and only if $x_0^T y=0, x^T y=0$. But $x_0^T \cdot B=\ALeg2n=0$, so we must have $x^T\cdot B=\ALeg2d\not=0$,  thus $d\equiv5(\mod 8)$.

For $\Lambda=(d,d,1)$ the corresponding genus one curve $D_\Lambda$ is:
\[D_\Lambda=
\begin{cases}
H_1:& -nt^2+du_2^2-u_3^2=0\\
H_2:& -nt^2+u_3^2-du_1^2=0\\
H_3:& 2d't^2+u_1^2-u_2^2=0
\end{cases}\] Here  $d':=n/d$.
According to Cassels pairing, we have to chose global points $Q_i$  on $H_i$: for $H_3$ we chose the global point $Q_3=(0,1,1)$, then the tangent plane at $Q_3$ is
$$L_3: u_1-u_2$$
For $H_1$ we chose the global point $Q_1=(b,c,da)$ on $H_1$, where $(a,b,c)$ is a primitive positive integer solution of
\begin{equation}\label{eq:cab-rankk-2}
c^2=da^2+ d'b^2
\end{equation}
The existence of this solution origins from $h_4(n)=1$ and $d$ corresponds to
$R\left(
                    \begin{array}{c}
                      x \\
                      0 \\
                    \end{array}
                  \right)=0
$ by Proposition \ref{gauss}, \ref{gauss-Redei}. We may assume that $2|a$, since else $a$ is odd, $b$ is even and we have a new solution
\[(\tda,\tdb,\tdc):=\brBig{ d'a-2d'b-da, db-2da-d'b, (d+d')c}\]
with $v_2(\tda)\ge2$  and $v_2(\tdb)=v_2(\tdc)=1$, hence by dividing the greatest common divisor, we get a new primitive solution with corresponding $a$ even.  Then the corresponding tangent plane $L_1$ at this point is
\[L_1:  d'bt-cu_2+au_3\]

Hence by Cassels pairing we have to compute
\[\Ba\Lambda{\Lambda'}=\prod_p\brBig{L_1L_3(P_p),-1}_p\]
for any local point $P_v$ on $D_\Lambda(\bQ_p)$, here $p$ includes the real place.

By Lemma \ref{lem-Cas} we only have to compute those $p|2n\infty$. Note all $p|n$ are congruent to $1 \pmod 4$, so $-1$ is a square in $\bQ_p$, thus by Hilbert symbol we have these $\brBig{L_i(P_p),-1}_p=1$. Whence we only have to consider local solutions at $2\infty$, we can chose as follows:

For $p=\infty$: we chose $t=0,   u_1=-u_2=1,  u_3=\sqrt d$, then
$$\brBig{L_1L_3(P_\infty),-1}_\infty=\brBig{2(c+a\sqrt{d}),-1}_\infty=1$$

For $p=2$, we chose $t=2,  u_1=1, u_2^2=1+8d',  u_3^2=d+4n$ and we may assume $u_2\equiv 3(\mod 8)$ as $u_2^2=1+8d'\equiv9(\mod 16)$, so $$\brBig{L_1L_3(P_2),-1}_2=\brBig{(1-u_2)(2bd'-cu_2+au_3),-1}_2=(-2,-1)_2(c,-1)_2=-(c,-1)_2$$
where we  used the fact that $\frac{a u_3}2\equiv bd'(\mod 2)$. Thus  in this case we have
\[\Ba\Lambda{\Lambda'}=-\Leg{-1}c\]
By modulo $c$ of the equation (\ref{eq:cab-rankk-2}) we get $\Leg{-n}c=1$: whence $\Ba\Lambda{\Lambda'}=-1$ if and only if $\Leg cn=1$.

Now we begin our last step is this case: use Gauss genus theory to reduce to $8$-rank of the ideal class group of $\bQ(\sqrt{-n})$. Since $R$ has rank $k-1$ and $x_0^TR=0$, we know a vector $y\in\bF_2^k$ lies in $\Im R$ if and only if $x_0^Ty=0$. From this we get $\Leg cn=1$ is equivalent to $ Rz=C$ has a solution $z\in\bF_2^k$,  this is equivalent to $h_8(n)=1$  by the argument before Remark \ref{rem1} of \S3.2. Hence under the condition $\rank A=k-2$ we know that the Cassels pairing is non-degenerate if and only if $h_8(n)=1\equiv \frac{d-1}4(\mod 2)$ since $d\equiv 5(\mod 8)$. \\

(II): if $\rank A=k-1$, then $d$ corresponds to the solution of $Ax=B$. Since we have
\[R\left(
     \begin{array}{c}
       x \\
       1 \\
     \end{array}
   \right)=Ax+B=0
\] we know that $2d$ is a norm. Then  $(d,d',n)\in(2d,2d,1)+\En(\bQ)[2]$, so we still use $\Lambda$ to denote $(2d,2d,1)$ since this won't effect $\Ba \Lambda{\Lambda'}$, then the corresponding genus $1$ curve $D_\Lambda$ is given by:
\[
\begin{cases}
H_1:& -nt^2+2du_2^2-u_3^2=0\\
H_2:& -nt^2+u_3^2-2du_1^2=0\\
H_3:& d't^2+u_1^2-u_2^2=0
\end{cases}
\]

For $H_3$ we chose the global point $Q_3=(0,1,1)$, then the tangent plane $L_3$ of $H_3$ at $Q_3$ is $$L_3: u_1-u_2$$
Since $2d$ is a norm,  there is a positive primitive solution of the Diophantine equation $$2c^2=da^2+d'b^2$$
So the global point $Q_1: (b,c,ad)$ is on $H_1$, and the tangent plane $L_1$ of $H_1$ at $Q_1$ is \[L_1: d'bt-2cu_2+au_3\]

With the same reason as $\rank A=k-2$,  we only need to consider at $2\infty$. For $p=\infty$ we chose local solution:
\[P_\infty:  t=0,   u_1=-u_2=1, u_3=\sqrt{2d} \]
Then
$$(L_1L_3(P_\infty),-1)_\infty=\brBig{2(2c+a\sqrt{2d}),-1}_\infty=1$$

For $p=2$, we chose local solution
\[P_2: t=1,   u_1=2\ALeg2d,  u_2^2=d'+u_1^2,  u_3^2=n+2du_1^2 \]
such that $cu_2\equiv 1 \pmod 4, 8|(bd'+au_3)$, since we have
\begin{eqnarray*}
(bd')^2-(au_3)^2&=&b^2{d'}^2-a^2n-2a^2du_1^2=(2c^2-da^2)d'-a^2n-2a^2du_1^2  \\
&=& 2(c^2d'-a^2n-a^2du_1^2)\equiv 2(d'-n-du_1^2)\equiv 0(\mod 16)
\end{eqnarray*}
so we may chose $u_3$ such that: $8|(bd'+au_3)$.

\begin{eqnarray*}
\brBig{L_1L_3(P),-1}_2&=&\brBig{(u_1-u_2)(d'b-2cu_2+au_3),-1}_2=(-2,-1)_2(u_1-c,-1)_2\\
&=&(c-u_1,-1)_2=(-1)^{\frac{d-1}4}(c,-1)_2
\end{eqnarray*}

Similar argument as above, we know that $(c,-1)_2=1$ if and only if $\Leg cn=1$, this is equivalent to  $h_8(n)=1$, so Cassels pairing is non-degenerate in this case if and only if $h_8(n)\equiv \frac{d-1}4  \pmod 2$. \\

Summarize these cases together we get: $\rank_\bZ \En(\bQ)=0, \Sha(\En/\bQ)[2^\infty]\simeq \brbig{\bZ/2\bZ}^2$ is equivalent to  $h_8(n)\equiv \frac{d-1}4 \pmod 2$. This completes the proof of  Theorem
\ref{mainthm1}.
\end{proof}

To show  the condition of Li-Tian is also sufficient as claimed in the introduction, we first introduce the quartic residue symbol used in Jung-Yue \cite{yueJung2011eightrank}: for $p$ a prime congruent to $1 \pmod 4$, then there are two primitive Gaussian primes $\pi, \pi'$ over $p$ with $p=\pi\pi'$, hence we have two quartic residue symbol $\Leg{}\pi, \Leg{}{\pi'}$ over $\bZ[i]$ such that their squares are the Legendre residue symbol $\Leg{}p$ over $\bZ$. If $q\in \bZ$ such that $\Leg qp=1$, then $\Leg q\pi=\Leg q{\pi'}=\pm1$, and we denote by $\Leg{q}p_4$ be either   $\Leg q\pi$ or $\Leg q{\pi'}$. Similarly if $d$ is a  positive integer with all prime divisor $p\equiv 1 \pmod 4$ such that  $\Leg qp=1$, then
\[\Leg qp_4:=\prod_{p \mid  d} \Leg qp_4^{v_p(d)}\]

\begin{thm}\label{lem-LiTian}
For $p\equiv 1 \pmod 8$ a prime, then there exists positive integers such that: $p=u^2+8v^2=a^2+16b^2=x^2-32y^2$. Then the following are equivalent:
\begin{enumerate}
\item[(1)] $2 \nmid  v$;
\item[(2)] $2 \nmid  \frac{p-1}8+b$;
\item[(3)] $1+i_p$ is not a $p$-adic square with $i_p\in \bQ_p$ such that $i_p^2=-1$;
\item[(4)] $1+j_p$ is not a $p$-adic square with $j_p\in \bQ_p$ such that $j_p^2=2$;
\item[(5)] $\Leg2p_4=(-1)^{\frac{p-9}8}$;
\item[(6)] $x\equiv 3(\mod 4)$;
\item[(7)] $h_8(p)=0$.
\end{enumerate}
And we define $\delta_p=1$ if $p$ satisfies the above equivalent conditions, else $\delta_p=0$.
For $n$ a square-free positive integer with all prime factors congruent to $1 \pmod 8$ and $h_4(n)=1$, the following are equivalent:
\begin{enumerate}
\item[(i)] $2 \nmid   \delta_n:=\sum_{p|n} \delta_p $
\item[(ii)] $h_8(n)=0$
\item[(iii)] $\rank_\bZ \En(\bQ)=0, \;\Sha(\En/\bQ)[2^\infty]\simeq \brbig{\bZ/2\bZ}^2$.
\end{enumerate}
\end{thm}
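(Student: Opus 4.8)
The plan is to prove Theorem \ref{lem-LiTian} in two stages: first the equivalence of the seven conditions attached to a single prime $p\equiv1\pmod8$, and then the equivalence of (i)--(iii) for general $n$. For the single-prime statement, I would organize the equivalences around a small hub rather than proving a cycle. The natural hub is condition (7), $h_8(p)=0$, together with the ternary-form description of $h_8$ developed in \S3.2--\S3.3: when $k=1$ and $n=p$, the higher Redei matrix $R^*$ is a $1\times 2$ matrix $\bigl(\ALeg{c_1}{p}\;\ALeg{c_2}{p}\bigr)$, where $c_1,c_2$ come from primitive solutions of $z^2=x^2\cdot p\cdot(\text{trivial})$ --- more precisely $c_1$ from $z^2=x^2+py^2$ (the $d=1$ case is degenerate, so really $c_1$ comes from the unique nontrivial $d=p$) and $c_2$ from $2z^2=x^2+py^2$. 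So $h_8(p)=0$ iff $\ALeg{c_2}{p}=1$ iff, via the parametrization $p=x^2-32y^2$ (which is exactly $2z^2=x^2+py^2$ rewritten with $z=x$, $y=2y$ up to a factor, giving $c_2$ related to $x$), we get $x\equiv3\pmod4$; this yields (6)$\Leftrightarrow$(7). The equalities $p=u^2+8v^2=a^2+16b^2=x^2-32y^2$ are all classical representation facts for $p\equiv1\pmod8$, and the congruences (1),(2),(6) relating $v$, $b$, $\frac{p-1}8$, $x$ modulo $2$ or $4$ are elementary consequences of comparing these representations mod $16$ or mod $32$; I would derive (1)$\Leftrightarrow$(2)$\Leftrightarrow$(6) by such direct manipulation. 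For (3) and (4): $1+i_p$ is a $p$-adic square iff $1+i_p$ is a square mod $p$ iff $-(1+i_p)$-type norm conditions hold, which translates through $p=a^2+16b^2$ (so $i_p\equiv a/(4b)$ or similar) into the parity of $b+\frac{p-1}8$; similarly $1+j_p$ square with $j_p^2=2$ ties to $p=u^2+8v^2$ and the parity of $v$. Finally (5), $\ALeg{2}{p}_4=(-1)^{(p-9)/8}$, is a known quartic-residue evaluation (it appears in Jung--Yue and goes back to Gauss); I would cite it or derive it from $2=\pi\bar\pi$-type factorizations in $\bZ[i]$ together with $p=a^2+16b^2$, linking it again to the $b$-parity. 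Thus all seven funnel into the parity of $b+\frac{p-1}8$ (equivalently into $h_8(p)$).

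For the second stage, (ii)$\Leftrightarrow$(iii): under the hypothesis $h_4(n)=1$ with all prime factors of $n$ congruent to $1\pmod8$, Theorem \ref{mainthm1} already gives that (iii) is equivalent to $h_8(n)\equiv\frac{d(n)-1}4\pmod2$. Since every prime factor of $n$ is $\equiv1\pmod8$, we have $d(n)\equiv1\pmod8$ as well (it is a product of primes $\equiv1\pmod8$, or its complementary divisor is), hence $\frac{d(n)-1}4\equiv0\pmod2$, and so (iii) becomes exactly $h_8(n)\equiv0\pmod2$. But when $h_4(n)=1$, $h_8(n)\in\{0,1\}$ (there is at most one independent class in $\cA_n[2]\cap2\cA_n$, so at most one can be promoted into $4\cA_n$), so $h_8(n)\equiv0\pmod2$ is the same as $h_8(n)=0$. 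This gives (ii)$\Leftrightarrow$(iii) cleanly.

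For (i)$\Leftrightarrow$(ii): I would use the higher Redei matrix machinery of \S3.3 with $k=\omega(n)$ primes but packaged so that the relevant quantity is a sum over primes. Because all $p_i\equiv1\pmod8$ and (implicitly, from $h_4(n)=1$) the cross Legendre symbols $\Leg{p_i}{p_j}$ are controlled, the $\bF_2$-rank computation for $R^*$ reduces $h_8(n)$ to a single linear condition, and the relevant entry decomposes additively over the primes: the contribution of each $p_i$ is governed by $\ALeg{c}{p_i}$ for the ambient solution $c$, which by the single-prime analysis (conditions (3)/(4)/(5) localized at $p_i$) equals $\delta_{p_i}$. Hence the total obstruction is $\sum_{p\mid n}\delta_p=\delta_n\pmod2$, and $h_8(n)=0$ iff $\delta_n$ is odd --- wait, more precisely, $h_8(n)=0$ (the class does NOT lift to $4\cA_n$) iff the relevant $C$ is \emph{not} in $\Im R$, which by the row-sum structure of $R$ is iff $x_0^TC\neq0$, i.e. iff $\sum_i\ALeg{c}{p_i}=1$, i.e. iff $\delta_n$ is odd. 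That is exactly (i).

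\textbf{Main obstacle.} The routine-but-delicate core is the single-prime chain, specifically pinning down the precise $2$-adic normalizations so that each of (3), (4), (5) matches the \emph{same} parity and with the \emph{correct} sign $(-1)^{(p-9)/8}$ in (5); sign errors here are easy and the quartic residue symbol $\ALeg{2}{p}_4$ requires care about which Gaussian prime over $p$ is chosen (though the final statement is well-defined since the two choices are complex conjugates and $2$ is rational). The genuinely substantive step, as opposed to classical bookkeeping, is the identification in the second stage of the higher-Redei obstruction entry $\ALeg{c}{p_i}$ with the single-prime invariant $\delta_{p_i}$ --- this is where one must carefully track how the primitive solution $c$ of the ambient ternary equation $2z^2=x^2+ny^2$ restricts, modulo each $p_i$, to the local data ($1+j_{p_i}$ being a square or not) governing $\delta_{p_i}$. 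I expect that to be the one place where a clean conceptual argument (rather than a computation) is needed, and it is the natural hinge on which the theorem turns.
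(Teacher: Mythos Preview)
Your handling of (ii)$\Leftrightarrow$(iii) via Theorem~\ref{mainthm1} is correct and matches the paper exactly: since every prime factor of $n$ is $\equiv 1\pmod 8$, so is $d(n)$, hence $\frac{d(n)-1}{4}\equiv 0\pmod 2$ and the criterion collapses to $h_8(n)=0$.

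There is a genuine gap in your plan for (i)$\Leftrightarrow$(ii). You expect that for a primitive solution $(X,Y,c)$ of $2c^2=X^2+nY^2$ the local symbol $\ALeg{c}{p_i}$ equals $\delta_{p_i}$. This is false: as Remark~\ref{rem1} already warns, the individual entries of $C$ depend on the choice of primitive solution, and only the coset $C+\Im R$ is invariant. Since here $\Im R=\{y:x_0^Ty=0\}$, what is well-defined is the total $\Leg{c}{n}=\prod_i\Leg{c}{p_i}$, and it is this product---not the individual factors---that you must identify with $(-1)^{\delta_n}$. Your proposed mechanism, reducing the global $c$ modulo each $p_i$ and reading off the $(1+j_{p_i})$-data, does not give $\delta_{p_i}$ termwise: from $2c^2\equiv X^2\pmod{p_i}$ one only gets $\Leg{c}{p_i}=\Leg{X}{p_i}\cdot\Leg{2}{p_i}_4$, and the factor $\Leg{X}{p_i}$ is uncontrolled. (Your side remark that $\S3.3$ applies with each $d_i$ a single prime also fails in general: $h_4(n)=1$ does not force $\Leg{p_i}{p_j}=1$.)

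The paper's resolution is concrete and different from what you sketch: it builds the solution multiplicatively in $\bQ(\sqrt 2)$. Writing $p_i=x_i^2-32y_i^2$ and setting $x+4y\sqrt 2=\prod_i(x_i+4y_i\sqrt 2)$, one gets $n=x^2-32y^2$ and, via the unit $1+\sqrt 2$, the identity $2(x+4y)^2=(x+8y)^2+n$, hence a primitive solution with $c=x+4y$. Now $x\equiv x_1\cdots x_k\pmod 4$, so by reciprocity $\Leg{c}{n}=\Leg{-1}{x}=\prod_i\Leg{-1}{x_i}=(-1)^{\delta_n}$, using condition~(6). The multiplicativity in $\bQ(\sqrt 2)$ is the missing idea; once you have it, the argument is two lines.

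A smaller point: your parenthetical ``which is exactly $2z^2=x^2+py^2$ rewritten with $z=x$, $y=2y$'' is wrong; the passage from $p=x^2-32y^2$ to a solution of $2z^2=X^2+pY^2$ again goes through the unit $1+\sqrt 2$, yielding $c=x+4y$ (not $c=x$), and this is exactly what makes (6)$\Leftrightarrow$(7) transparent. For (1)--(5) the paper simply cites Li--Tian; your direct-derivation plan there is extra work but not wrong in spirit.
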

\begin{proof}
The existence of $u,v,a,b$ follows from $p$ splits in $\bQ(\sqrt{-2})$ and $\bQ(i)$.  Now we show the existence of $x,y$: since $p$  splits in $\bQ(\sqrt 2)$,   there are $x_0,y_0\in \bZ$ such that
\[p=x_0^2-2y_0^2\]
moreover $x_0$ is odd and $y_0$ is even as $p\equiv 1\pmod 8$. If $4|y_0$ then we define $x=x_0, y=\frac{y_0}4$. Else $4\nmid y_0$,  we can use the fundamental units $1+\sqrt 2$  to lift  $2$-divisibility. Concretely since $(1+\sqrt 2)^{2}=3+2\sqrt2$ has norm $1$, then $$p=(3x_0+4y_0)^2-2\brBig{{2x_0+3y_0}}^2$$
and $4|(2x_0+3y_0)$, then we define $x=3x_0+4y_0, y=\frac{{2x_0+3y_0}}4$. Then we get
$p=x^2-32y^2$.

The equivalence from  (1) to (5) are proved in Li-Tian \cite{LiDeLangTian}.

(5) is equivalent to (6): From
\begin{equation}\label{eq: p=x2-32y2}
p=x^2-32y^2
\end{equation}
we get
$$\Leg xp=\Leg {x^2}p_4=\Leg{-2y^2}p_4=\Leg2p_4\Leg yp=\Leg2p_4$$
where we used $\Leg yp=1$ by the equation (\ref{eq: p=x2-32y2}) modulo $y$.  But if we modulo the equation (\ref{eq: p=x2-32y2}) by $x$ we get
$$\Leg px=\Leg{-2}x=\Leg{-1}x (-1)^{\frac{x^2-1}8}=\Leg{-1}x (-1)^{\frac{p-1}8}$$
since $p\equiv x^2(\mod 32) $ by equation (\ref{eq: p=x2-32y2}) modulo $32$. Then by quadratic reciprocity law we know $\Leg xp=\Leg px$, from this we know (5) is equivalent to (6).\\

(6) and (7) are equivalent because by considering   $1+\sqrt 2$  similar as above
\begin{equation}\label{eq:2as norm element Q(-p)}
-p=N_{\bQ(\sqrt 2)/\bQ}\brBig{(x+8y)+(x+4y)\sqrt 2}=(x+8y)^2-2(x+4y)^2
\end{equation}
then  for the field $\bQ(\sqrt{-p})$, its Redei matrix is the  matrix $0_{1\times 2}$, so we get $2$ is a norm element, and in fact the equation (\ref{eq:2as norm element Q(-p)}) is the corresponding Diophantine equation, see Proposition \ref{gauss}. Hence by  Proposition \ref{gauss-Redei-c-version} with $c=x+4y$ there we get: $h_8(p)=0$ if and only if $\Leg{x+4y}p=-1$. But modulo the equation (\ref{eq:2as norm element Q(-p)}) by $x+4y$ we get $\Leg{-p}{x+4y}=1$, whence $$\Leg{x+4y}p=\Leg p{x+4y}=\Leg{-1}{x+4y}=\Leg{-1}x$$
From these we get  (6) is equivalent to (7).\\

The equivalence of (ii) and (iii) follows from Theorem \ref{mainthm1}. For the equivalence of (i) and (ii): let $x_i, y_i$ be positive integers such that
$p_i=x_i^2-32y_i^2$ as above, then
$$n=p_1\cdots p_k=N_{\bQ(\sqrt2)/\bQ}(x+4y\sqrt 2), \quad x+4y\sqrt2=\prod_1^k (x_i+4y_i\sqrt2) $$ and $x\equiv x_1\cdots x_k (\mod 4)$. Similarly by considering the fundamental unit $1+\sqrt2$, we have:
\[2(x+4y)^2=(x+8y)^2+n\]
Then using Gauss genus theory as above we get: $h_8(n)=0$ if and only if $\Leg{x+4y}n=-1$, we also have  $\Leg {-n}{x+4y}=1$, so
$$\Leg{x+4y}n=\Leg n{x+4y}=\Leg{-1}{x+4y}=\Leg{-1}{x_1\cdots x_k}=(-1)^{\delta_n}$$ From this we get the equivalence of (i) and (ii).

Hence we complete the proof.
\end{proof}

\subsection{ Second Main Theorem}

In this subsection ,we let $n=d_1\cdots d_k$ be a square-free positive integer with all prime factors congruent to $1  \pmod 8$, such that
\begin{enumerate}
\item[(1)] $h_4(d_i)=1,  1\le i\le k$;
\item[(2)] $\Leg{p_{i_1}}{p_{i_2}}=1$ for any prime divisor $p_{i_j}$ of $d_{i_j}$ with $i_1\not=i_2$.
\end{enumerate}

Since all prime divisors of $n$ are congruent to $1(\mod 8)$, hence the corresponding $D_2, D_{-2}$ matrix of $n$ are $0$, thus
\begin{eqnarray*}
M_n&=&\diag\brBig{A, A},\quad R_n=\brBig{A\; 0}
\end{eqnarray*}
with $A=\diag\brBig{A_{d_1}, \cdots, A_{d_k}}$
as $n$ satisfying (2), where $A_{d_i}$ denotes the corresponding $A$-matrix in defining the corresponding Monsky matrix of $d_i$. From $h_4(d_i)=1$ and all prime divisor of $d_i$ are congruent to $1 \pmod 8$ we get $\rank A_{d_i}=\omega(d_i)-1$ by Proposition \ref{gauss} and \ref{gauss-Redei}.\\

Now we get $s_2(n)=2k$: since
$$s_2(n)=2\sum_{i=1}^k \omega(d_i)-\rank M=
2\sum_{i=1}^k\brBig{\omega(d_i)-\rank A_{d_i}}=2k$$
And the following is a base of $\ker M_n$:
\begin{eqnarray*}
\brBig{\uset{\omega(d_1\cdots d_{i-1})}{\underbrace{0,\cdots,0}},
\uset{\omega(d_i)}{\underbrace{1,\cdots,1}},
\uset{\omega(d_{i+1}\cdots d_k)}{\underbrace{0,\cdots,0}},
\uset{\omega(n)}{\underbrace{0,\cdots,0}}}^T,
\brBig{
\uset{\omega(n)}{\underbrace{0,\cdots,0}},
\uset{\omega(d_1\cdots d_{i-1})}{\underbrace{0,\cdots,0}},
\uset{\omega(d_i)}{\underbrace{1,\cdots,1}}, \uset{\omega(d_{i+1}\cdots d_k)}{\underbrace{0,\cdots,0}}
}^T
\end{eqnarray*}
with $1\le i\le k$,  consequently  $\Sel_2(\En)/\En(\bQ)[2]$ has a base consists of
\[\Lambda_i=(1, d_i, d_i),\;\; \Lambda_i'=(d_i, d_i, 1), 1\le i \le k\]

Now we can compute the Cassels pairings of this base, see:
\begin{prop}\label{prop-anySha}
Let $n, \Lambda_i, \Lambda_i'$  as above,  we have the following Cassels pairings:
\begin{eqnarray*}
\Babig{\Lambda_i}{\Lambda_j}&=&\Leg{c_i\gamma_i}{d_j}, \;i\not=j  \\
\Babig{\Lambda_i}{\Lambda_j'}&=&\Leg{c_i}{d_j}=\Leg{\barc_j}{d_i}, \;i\not=j   \\
\Babig{\Lambda_i}{\Lambda_i'}&=&(-1)^{\delta_{d_i}}\Leg{c_i}{d_i}=
(-1)^{\delta_{d_i}}\Leg{\barc_i}{d_i}  \\
\Babig{\Lambda_i'}{\Lambda_j'}&=&\Leg{c_i\barc_i}{d_j}, \;i\not=j
\end{eqnarray*}
Where $c_i $ consists a fixed primitive positive integer solution of
\begin{equation}\label{eq:cCgamma-ci}
c_i^2=d_ia_i^2+d_i' b_i^2
\end{equation}
with  $ d_i'=n/d_i$, and $\gamma_i, \barc_i$ consist   primitive positive integer solutions of
\begin{equation}\label{eq:cCgamma}
\gamma_i^2=d_i\alpha_i^2+2d_i' \beta_i^2, \quad \barc_i^2=d_i\bara_i^2-d_i'\barb_i^2
\end{equation}
respectively such that:
\begin{equation}\label{eq-cond:cCgamma}
d_i'\Big| \brBig{a_i\gamma_i+ c_i \alpha_i , \;\; a_i\barc_i+c_i\bara_i}
\end{equation}
\end{prop}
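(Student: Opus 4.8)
The plan is to compute each of the four pairings straight from the definition of the Cassels pairing, in exactly the style of the computation carried out for Theorem~\ref{mainthm1}. For the first slot $\Lambda\in\{\Lambda_i,\Lambda_i'\}$ I would write down the genus one curve $D_\Lambda\subset\bP^3$ with its three quadrics $H_1,H_2,H_3$, pick convenient rational points $Q_l\in H_l(\bQ)$ — their existence is automatic since $\Lambda\in\Sel_2(\En)$, so each quadric surface $H_l$ is everywhere locally soluble and hence has a rational point by Hasse--Minkowski — take the tangent planes $L_l$ at the $Q_l$, and evaluate $\Babig{\Lambda}{\Lambda'}=\prod_p\prod_{l=1}^3\brbig{L_l(P_p),d_l'}_p$ for a well chosen adelic point $P=(P_p)$ on $D_\Lambda$. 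Concretely, for $\Lambda_i=(1,d_i,d_i)$ one quadric is $-nt^2+d_iu_2^2-d_iu_3^2=0$, on which $(0:1:1)$ lies with tangent plane $u_2-u_3$; the other two quadrics have rational points built respectively from a primitive solution of (\ref{eq:cCgamma-ci}) (giving the point with $u_1$-coordinate $c_i$) and from a primitive solution of (\ref{eq:cCgamma}) (giving the point with $u_2$-coordinate $\gamma_i$), after the identities $d_ic_i^2=(d_ia_i)^2+nb_i^2$ and $d_i\gamma_i^2=(d_i\alpha_i)^2+2n\beta_i^2$; for $\Lambda_i'=(d_i,d_i,1)$ one uses $c_i$ and $\barc_i$ in the same way.

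Next I would reduce the product over places. By Lemma~\ref{lem-Cas} every $p\nmid 2n\infty$ contributes $+1$; at $p=\infty$ one picks a real point with all square roots real and positive, so the archimedean factor is $+1$. For an odd $p\mid n$ that does not divide the "active" factor in the second slot ($d_j$ in $\Babig{\Lambda_i}{\Lambda_j}$ or $\Babig{\Lambda_i}{\Lambda_j'}$, $d_i$ in the diagonal case), hypothesis~(2) gives $\Leg{d_j}p=\prod_{p'\mid d_j}\Leg{p'}p=1$, so $d_j$ is a square in $\bQ_p$ and the factor $\brbig{L_l(P_p),d_j}_p$ is trivial; this is the step where the coprimality hypothesis (2) is used essentially. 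What remains is the contribution of the primes $p$ dividing the active factor, together with $p=2$. One then checks (with a suitable choice of $P_2$, odd for most $L_l(P_2)$) that $p=2$ contributes $+1$ in the three off-diagonal cases, while in the diagonal case $\Babig{\Lambda_i}{\Lambda_i'}$ it contributes the sign $(-1)^{\delta_{d_i}}$; here the divisibility condition (\ref{eq-cond:cCgamma}) on the auxiliary solutions $\gamma_i,\barc_i$ is exactly what forces the $2$-adic relations (such as $8\mid(a_i\gamma_i+c_i\alpha_i)$) that make this $2$-adic computation reduce to the one already done in Theorem~\ref{lem-LiTian} localised at the primes dividing $d_i$, identifying the $2$-adic invariant there with $\delta_{d_i}$. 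At each active odd prime $p\mid d_j$ the remaining Hilbert symbol works out, after the choice of $P_p$, to $\Leg{c_i\gamma_i}p$, $\Leg{c_i}p$, $\Leg{c_i\barc_i}p$ in the respective cases; multiplying over $p\mid d_j$ and using that the omitted factors (at $2$, at $\infty$, and at $p\mid d_i$) are $+1$, together with Hilbert reciprocity $\prod_p\brbig{a,b}_p=1$, assembles the stated Legendre symbols.

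Finally, the two symmetry assertions — $\Leg{c_i}{d_j}=\Leg{\barc_j}{d_i}$ for $i\ne j$ and $\Leg{c_i}{d_i}=\Leg{\barc_i}{d_i}$ — come for free from the symmetry of the Cassels pairing over $\bF_2$: computing $\Babig{\Lambda_i}{\Lambda_j'}$ via $D_{\Lambda_i}$ gives $\Leg{c_i}{d_j}$, computing $\Babig{\Lambda_j'}{\Lambda_i}$ via $D_{\Lambda_j'}$ gives $\Leg{\barc_j}{d_i}$, and the two are equal; likewise $\Babig{\Lambda_i}{\Lambda_i'}=\Babig{\Lambda_i'}{\Lambda_i}$ gives the diagonal identity. (This also explains the remark that $A^*$ is intrinsic.)

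I expect the main obstacle to be the $2$-adic bookkeeping: choosing the local points $P_2$ with the right congruences, arranging the auxiliary solutions $\gamma_i,\barc_i$ so that (\ref{eq-cond:cCgamma}) holds (equivalently, acting by the relevant Pell-type unit to fix the signs modulo $d_i'$), and correctly extracting the sign $(-1)^{\delta_{d_i}}$ in the diagonal case. Everything else — the reduction to bad places, the use of hypothesis (2) at $p\mid d_i$, and the conversion of the surviving local factors into Legendre symbols — is routine bookkeeping of the kind already performed for Theorem~\ref{mainthm1}.
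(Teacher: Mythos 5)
Your global skeleton coincides with the paper's (points on the $H_l$ built from the given Diophantine solutions, tangent planes, reduction to $p\mid 2n\infty$ via Lemma \ref{lem-Cas}, hypothesis (2) to trivialize the primes not dividing the active $d_j$, and symmetry of the pairing for the identities $\Leg{c_i}{d_j}=\Leg{\barc_j}{d_i}$, $\Leg{c_i}{d_i}=\Leg{\barc_i}{d_i}$), but the two places where you locate the real work are misidentified, and this is a genuine gap. The place $p=2$ (and likewise $\infty$) cannot contribute anything here: the entries of the second argument $\Lambda_j$ or $\Lambda_j'$ are $1$ and $d_j$, positive and congruent to $1\pmod 8$, hence squares in $\bQ_2^\times$ and in $\bR^\times$, so every factor $\brbig{L_l(P_2),\,\cdot\,}_2$ is $+1$ no matter how $P_2$ is chosen. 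In particular your plan to extract the sign $(-1)^{\delta_{d_i}}$ from a $2$-adic computation cannot succeed. In the paper that sign arises at the odd primes $p\mid d_i$: the local point on $D_{\Lambda_i}$ there is taken with $u_2=-j_pu_3$, $j_p=\sqrt2\in\bQ_p$ (resp.\ $u_2=-i_pu_1$, $i_p=\sqrt{-1}$, on $D_{\Lambda_i'}$), the product of tangent planes then contains the factor $c_i(1+j_p)$ (resp.\ $\barc_i(1+i_p)$), and whether $1+j_p$ (resp.\ $1+i_p$) is a square in $\bQ_p$ is exactly criterion (4) (resp.\ (3)) of Theorem \ref{lem-LiTian}; multiplying over $p\mid d_i$ produces $(-1)^{\delta_{d_i}}$.

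You also misread condition (\ref{eq-cond:cCgamma}): it is divisibility by the odd integer $d_i'$, not a congruence such as $8\mid(a_i\gamma_i+c_i\alpha_i)$, and its role is at the odd primes $p\mid d_i'$ (in particular at the active primes $p\mid d_j$, $j\neq i$, of the off-diagonal pairings), not at $2$. At such a $p$ one must evaluate two tangent planes at one and the same local point, i.e.\ with a single choice of the square root $u_1$ of $d_i$ (resp.\ $u_3$) in $\bQ_p$; choosing $u_1$ with $p\mid(c_i-a_iu_1)$ makes $L_2(P_p)\equiv 2c_i$, and (\ref{eq-cond:cCgamma}) is precisely what guarantees that the same choice also gives $p\mid(\alpha_iu_1+\gamma_i)$, hence $L_3(P_p)\equiv-2\gamma_i$, a $p$-adic unit with known residue; without it $L_3(P_p)$ could have positive valuation and the evaluation $\Leg{c_i\gamma_i}{p}$ (similarly $\Leg{c_i\barc_i}{p}$ via $\barc_i$) is unjustified. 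So the two ingredients you deferred as ``$2$-adic bookkeeping'' are in fact computations at odd primes dividing $n$; once they are relocated there and carried out as above, the rest of your outline agrees with the paper's proof.
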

\begin{rem}
We remark that we can always find solutions of the equations (\ref{eq:cCgamma}) satisfying the conditions (\ref{eq-cond:cCgamma}), we will omit all the subscripts in this remark. If we define $t= (d',  a\gamma  +c \alpha )$, then we have a new solution
\begin{equation}\label{eq:newsolution}
( \tilde{\alpha}, \tilde{\beta}, \tilde\gamma )=
\brBig{-d \alpha  t^2+4d '\beta  t+2 d '\alpha ,\; d \beta  t^2+2d  \alpha  t-2d '\beta ,\; \gamma  (d t^2+2d ') }
\end{equation}
of $z^2=d x^2+2d 'y^2$. Then
\[a\tilde{\gamma}+c\tilde{\alpha}=d t^2(a\gamma-c \alpha)+2d'\brBig{2c\beta t+(a \gamma +c \alpha )}\]By modulo equations (\ref{eq:cCgamma-ci}) and (\ref{eq:cCgamma}) by $d'$ we get $d'\big|(a\gamma+c\alpha)(a\gamma-c\alpha)$.

Then for any $p|t=(d' , a \gamma    +c \alpha )$, we have $v_p(a\tilde{\gamma}+c\tilde{\alpha})\ge 2$ and $v_p(\tilde{\alpha})=1$ as $p \nmid  \alpha$.
For $p\Big|\frac{d '}t$, then $p|(a\gamma-c\alpha)$, therefore $v_p(a\tilde{\gamma}+c\tilde{\alpha})\ge 1$ and $v_p(\tilde{\alpha})=0$. Thus we get a solution of $z^2=d x^2+2d 'y^2$ satisfying the condition (\ref{eq-cond:cCgamma}) by dividing the greatest common divisors  of $\tilde{\alpha}$ and $\tilde{\gamma}$ in (\ref{eq:newsolution}).

Similarly for $(\bara, \barb, \barc)$ we can get a solution satisfying (\ref{eq-cond:cCgamma}). In fact we choose $s=(d ',a \barc +c \bara )$, and we get a solution
\[\brBig{-d\bara s^2+2d'\barb s-d'\bara,\; d\barb s^2-2d\bara s+d'\barb, \; \barc(ds^2-d')}\]
Then do similarly as above.
\end{rem}

Now we are ready to prove Proposition \ref{prop-anySha}:
\begin{proof} For $\Lambda_i=(1,d_i,d_i)$ the corresponding genus one curve $D_{\Lambda_i}$ is:
\[
\begin{cases}
H_1:& -nt^2+d_iu_2^2-d_iu_3^2=0\\
H_2:& -nt^2+d_iu_3^2-u_1^2=0\\
H_3:& 2nt^2+u_1^2-d_iu_2^2=0
\end{cases}\]

Then we chose
$$Q_1=(0,1,1),\quad Q_2=(b_i,c_i, d_ia_i),\quad Q_3=(\beta_i, d_i\alpha_i, \gamma_i )$$
these global points are on $H_i$ follows from equations (\ref{eq:cCgamma-ci}) and (\ref{eq:cCgamma}). Then the corresponding tangent planes are
\begin{eqnarray*}
L_1:&&  u_2-u_3  \\
L_2:&& d_i'b_it-c_iu_3+a_iu_1  \\
L_3:&&  2d_i'\beta_i t+\alpha_i u_1-\gamma_i u_2
\end{eqnarray*}

By Lemma \ref{lem-Cas}, we only need to compute $p|2n\infty$. Since all $d_i$ are bigger than $0$ and congruent to $1 \pmod 8$, hence by properties of Hilbert symbol, they are trivial at $p=2, \infty$. So we only need to consider those $p|n$:

We can chose local solutions $P_p\in D_{\Lambda_i}(\bQ_p)$ as following: For $p\big| d_i'$:
\[\begin{array}{ccc}
t=0,  &u_2=-u_3=1,& u_1^2=d_i
\end{array}\]
such that $p\big| (c_i-a_iu_1),\;p\big| (\alpha_i u_1+\gamma_i) $, as $(c_i-a_iu_1)(c_i+a_iu_1)=d_i'b_i^2$, so we can chose $u_1$ such that $p\big| (c_i-a_iu_1)$, thus we also have $p\big| (\alpha_i u_1+\gamma_i)$ follows from condition (\ref{eq-cond:cCgamma}). For $p\big| d_i$:
\[\begin{array}{ccc}
t=1, & u_1=0, u_3^2=d_i', &u_2=-j_p u_3
\end{array}\] such that $p\big|(d_i'b_i+c_iu_3)$, since we have $(d_i'b_i+c_iu_3)(d_i'b_i-c_iu_3)=-na_i^2$, and $j_p\in \bQ_p$ satisfying $j_p^2=2$ as in Theorem \ref{lem-LiTian}. \\

Now we begin to compute Cassels pairings:

First we compute $\Babig{\Lambda_i}{\Lambda_j}$ with $i\not=j$, then by Cassels pairing we have
$$\Babig{\Lambda_i}{\Lambda_j}=\Babig{(1,d_i,d_i)}{(1,d_j,d_j)}=\prod_p \brBig{L_2L_3(P_p), d_j}_p$$
Note that $\Leg{d_j}p=1$ if $p \nmid   d_j$, so this pairing is trivial at $p| d_j'$, thus we only need consider at those $p|d_j$, and for $p|d_j$:
$$\brBig{L_2L_3(P_p),d_j}_p=\brBig{(c_i+a_iu_1)(\alpha_i u_1-\gamma_i), d_j}_p=\brBig{-c_i\gamma_i, d_j}_p=\Leg{c_i\gamma_i}p$$
 Thus we have:
\[\Babig{(1,d_i,d_i)}{(1,d_j,d_j)}=\Leg{c_i\gamma_i}{d_j} \]\quad\\

Second we compute $\Babig{\Lambda_i}{\Lambda_j'}$ with $i\not=j$, then Cassels pairing implies that
$$\Babig{\Lambda_i}{\Lambda_j'}=\Babig{(1,d_i,d_i)}{(d_j,d_j,1)}=\prod_p\brBig{L_1 L_2(P_p), d_j}_p$$
Since $\Leg{d_j}p=1$ if $p \nmid   d_j$,   this pairing is trivial at $p| d_j'$, therefore we suffice to consider at those $p|d_j$, and for $p|d_j$:
$$\brBig{L_1L_2(P_p),d_j}_p=\brBig{2(c_i+a_iu_1), d_j}_p=\brBig{c_i, d_j}_p=\Leg{c_i}p$$
Whence
\[\Babig{(1,d_i,d_i)}{(d_j,d_j,1)}=\Leg{c_i}{d_j} \]\quad\\

Third we compute $\Babig{\Lambda_i}{\Lambda_i'}$, from Cassels pairing we have $$\Babig{\Lambda_i}{\Lambda_i'}=\Babig{(1,d_i,d_i)}{(d_i,d_i,1)}=\prod_p \brBig{L_1L_2(P_p), d_i}_p$$
From $\Leg{d_i}p=1$ if $p \nmid   d_i$ we know this pairing is trivial at $p| d_i'$, thus we reduce to compute at those $p|d_i$, and for $p|d_i$:
$$\brBig{L_1L_2(P_p),d_i}_p=\brBig{(-j_p-1)u_3(d_i'b_i-c_iu_3), d_i}_p=\brBig{c_i(1+j_p), d_i}_p=(-1)^{\delta_p}\Leg{c_i}p$$
Consequently
\[\Babig{(1,d_i,d_i)}{(d_i,d_i,1)}=(-1)^{\delta_{d_i}}\Leg{c_i}{d_i} \]\quad \\

For $\Lambda_i'=(d_i,d_i,1)$, the corresponding $D_{\Lambda_i'}$ is given by:
\[
\begin{cases}
H_1:& -nt^2+d_iu_2^2-u_3^2=0\\
H_2:& -nt^2+u_3^2-d_iu_1^2=0\\
H_3:& 2nt^2+d_iu_1^2-d_iu_2^2=0
\end{cases}\]

From equations (\ref{eq:cCgamma-ci}) and (\ref{eq:cCgamma}) we have global points $Q_i$ on $H_i$ as follows
\[ Q_1=(b_i,c_i,d_ia_i) , \quad Q_2=(\barb_i, d_i\bara_i, \barc_i), \quad Q_3=(0,1,1)  \]
Then the tangent planes at these points are
\begin{eqnarray*}
L_1:&&   d_i'b_it-c_iu_2+a_iu_3  \\
L_2:&&  d_i'\barb_it-\bara_iu_3+\barc_iu_1  \\
L_3:&&  u_1- u_2
\end{eqnarray*}

Similarly the pairings are trivial outside of $p|n$, hence we only have to consider for those $p|n$. We can chose local solutions $P_p\in D_{\Lambda_i'}(\bQ_p)$ as following: For $p|d_i'$:
\[\begin{array}{ccc}
t=0,  &u_2=-u_1=1,& u_3^2=d_i
\end{array}\] such that $p\big| (c_i+a_iu_3), p\big| (\barc_i-\bara_iu_3)$,  as  $(c_i-a_iu_3)(c_i+a_iu_3)=d_i'b_i^2$, so we can chose $u_3$ such that $p\big| (c_i+a_iu_3)$, thus we also have $p\big| (\barc_i-\bara_iu_3)$ follows from condition (\ref{eq-cond:cCgamma}). For $p|d_i$,
\[\begin{array}{ccc}
 t=1, & u_3=0, u_1^2=-d_i', &u_2=-i_p u_1
\end{array}\] with $ p\big|(d_i'\barb_i-\barc_iu_1)$, since we have $(d_i'\barb_i-\barc_iu_1)(d_i'\barb_i+\barc_iu_1)=n\bara_i^2$, note here we have used $i_p\in \bQ_p$ such that $i_p^2=-1$ as in Theorem \ref{lem-LiTian}.\\

Now we begin to compute Cassels pairings:

First we compute $\Babig{\Lambda_i'}{\Lambda_j'}$  with $i\not=j$, then by Cassels pairing: $$\Babig{\Lambda_i'}{\Lambda_j'}=\Babig{(d_i,d_i,1)}{(d_j,d_j,1)}=\prod_p\brBig{L_1L_2(P_p), d_j}_p$$
Note that $\Leg{d_j}p=1$ if $p \nmid   d_j$, so this pairing is also trivial at $p|d_j'$, thus we only need to consider at those $p|d_j$, and for $p|d_j$:
$$\brBig{L_1 L_2(P_p), d_j}_p=\brBig{(-c_i+a_iu_3)(-\bara_i u_3-\barc_i), d_j}_p=\brBig{c_i\barc_i, d_j}_p=\Leg{c_i\barc_i}p$$
 Thus we have:
\[\Babig{(d_i,d_i,1)}{(d_j,d_j,1)}=\Leg{c_i\barc_i}{d_j} \]\quad\\

Then we compute $\Babig{\Lambda_i'}{\Lambda_j}$  with $i\not=j$:  then from Cassels pairing we have $$\Babig{\Lambda_i'}{\Lambda_j}=\Babig{(d_i,d_i,1)}{(1,d_j,d_j)}=\prod_p\brBig{L_2 L_3(P_p), d_j}_p$$
Since $\Leg{d_j}p=1$ if $p \nmid   d_j$, so we only need to consider at those $p|d_j$, and for $p|d_j$:
$$\brBig{L_2 L_3(P_p), d_j}_p=\brBig{-2(-\bara_i u_3-\barc_i), d_j}_p=\brBig{\barc_i, d_j}_p=\Leg{\barc_i}p$$
Whence
\[\Babig{(d_i,d_i,1)}{(1,d_j,d_j)}=\Leg{\barc_i}{d_j} \]\quad\\

Finally we compute the pairing $\Babig{\Lambda_i'}{\Lambda_i}$, then by Cassels pairing $$\Babig{\Lambda_i'}{\Lambda_i}=\Babig{(d_i,d_i,1)}{(1,d_i,d_i)}=\prod_p\brBig{L_2 L_3(P_p), d_i}_p$$
From $\Leg{d_i}p=1$ if $p \nmid   d_i$, we know this pairing is trivial at $p|2d_i'\infty$, thus we suffice to compute at those $p|d_i$, and for $p|d_i$:
$$\brBig{L_2L_3(P_p),d_i}_p=\brBig{(1+i_p)u_1(d_i'\barb_i+\barc_iu_1), d_i}_p=\brBig{\barc_i(1+i_p), d_i}_p=(-1)^{\delta_p}\Leg{\barc_i}p$$
Cnsequently
\[\Babig{(d_i,d_i,1)}{(1,d_i,d_i)}=(-1)^{\delta_{d_i}}\Leg{\barc_i}{d_i} \]

This completes the proof.
\end{proof}\quad

To give the matrix representation of Cassels pairing under the base $\Lambda_1,\cdots, \Lambda_k, \Lambda_1',\cdots, \Lambda_k'$, we  introduce two matrices over $\bF_2$:
\[  \Psi=(\psi_{ij}), \quad D^*=\diag(1-h_8(d_1),\cdots,1-h_8(d_k))\]
where $\psi_{ii}=\ALeg{c_i}{d_i}$ and $\psi_{ij}=\ALeg{\gamma_j}{d_i}$ if $i\not=j$. By  Theorem \ref{lem-LiTian}, we have $1-h_8(d_i)\equiv \delta_{d_i} \pmod 2$. According to the definition of $A^*$ is \S3.3 and Proposition \ref{prop-anySha},  we know  the Cassels pairing under the base $\Lambda_1,\cdots, \Lambda_k, \Lambda_1',\cdots, \Lambda_k'$ has matrix representation of the form
\begin{equation}\label{eq:matrixrep}
\Ma{{A^*}+{\Psi}}{ {A^*}^T+D^*} {A^*+D^*} {A^*+{A^*}^T }
\end{equation}
where we used that Cassels pairing is symmetric to derive $A^*+\Psi$ is symmetric. Then we have:

\begin{thm} \label{mainthm2}
Let $n=d_1\cdots d_k$ be a square-free positive integer with all prime factors congruent to $1 \pmod 8$, satisfying the following ideal class group conditions:
\begin{itemize}
\item $h_4(d_i)=1,  1\le i\le k$;
\item $\Leg{p_{i_1}}{p_{i_2}}=1$ for any prime divisor $p_{i_j}$ of $d_{i_j}$ with $i_1\not=i_2$;
\end{itemize}
If  $A^*$ of $n$ satisfying
\begin{enumerate}
\item[(1)] $A^*$ is symmetric;
\item[(2)] $A^*+D^*$  is non-singular;
\end{enumerate}
Then:
\[\rank_{\bZ} \En(\bQ)=0, \;\;\Sha(\En/\bQ)[2^\infty]\simeq \brbig{\bZ/2\bZ}^{2k}\]
\end{thm}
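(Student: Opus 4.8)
The strategy is the same as for Theorem \ref{mainthm1}: reduce the assertion to the non-degeneracy of the Cassels pairing on the pure $2$-Selmer group, and then read off that non-degeneracy from the explicit matrix (\ref{eq:matrixrep}) by a short $\bF_2$-linear-algebra computation. First I would observe that the two displayed ideal-class-group conditions on $n$ are exactly the hypotheses under which the computations of this subsection are carried out, so that $s_2(n)=\rank_{\bF_2}\Sel_2(\En)/\En(\bQ)[2]=2k$ and $\Sel_2(\En)/\En(\bQ)[2]$ has the basis $\Lambda_1,\dots,\Lambda_k,\Lambda_1',\dots,\Lambda_k'$. Running the long exact sequence (\ref{longexactseq}) together with the fact that the kernel of the Cassels pairing is $\Im\Sel_4(\En)/\En(\bQ)[2]$, exactly as in Theorem \ref{mainthm1}, the conclusion $\rank_\bZ\En(\bQ)=0$ and $\Sha(\En/\bQ)[2^\infty]\simeq\brbig{\bZ/2\bZ}^{2k}$ is equivalent to $\#\Sel_2(\En)=\#\Sel_4(\En)$, hence to the non-degeneracy of the Cassels pairing on $\Sel_2(\En)/\En(\bQ)[2]$. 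So it suffices to prove this non-degeneracy.

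By Proposition \ref{prop-anySha}, in the basis $\Lambda_1,\dots,\Lambda_k,\Lambda_1',\dots,\Lambda_k'$ the Cassels pairing has Gram matrix (\ref{eq:matrixrep}). Invoking hypothesis (1), so that ${A^*}^T=A^*$, and using that we work over $\bF_2$, so that $A^*+{A^*}^T=0$, the matrix (\ref{eq:matrixrep}) reduces to
\[
\Ma{A^*+\Psi}{A^*+D^*}{A^*+D^*}{0}.
\]
A block matrix of the form $\Ma{X}{Y}{Y}{0}$ with $X,Y$ square is non-singular whenever $Y$ is: if $(a,b)^T$ is in its kernel then $Ya=0$, so $a=0$ by invertibility of $Y$, and then $Yb=0$, so $b=0$. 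Applying this with $Y=A^*+D^*$, which is non-singular by hypothesis (2), shows that the Cassels pairing is non-degenerate, and the theorem follows.

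The one genuinely non-formal ingredient is the reduction in the first paragraph, namely that non-degeneracy of the Cassels pairing on the $2k$-dimensional $\bF_2$-space $\Sel_2(\En)/\En(\bQ)[2]$ already forces the $2$-descent to stabilise, whence $\rank_\bZ\En(\bQ)=0$ and $\Sha(\En/\bQ)[2^\infty]$ is killed by $2$; but this is identical to the mechanism used in Theorem \ref{mainthm1} and involves no new idea. The rest is routine: the linear-algebra step is elementary once hypothesis (1) has annihilated the lower-right block, and hypothesis (2) is precisely the condition that makes the block argument succeed; note in particular that $\Psi$ never enters the final count.
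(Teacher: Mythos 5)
Your proposal is correct and follows essentially the same route as the paper: hypothesis (1) kills the lower-right block $A^*+{A^*}^T$ of the Gram matrix (\ref{eq:matrixrep}), hypothesis (2) then forces non-degeneracy of the Cassels pairing (you merely make explicit the block-matrix kernel argument the paper leaves implicit), and the descent mechanism from the introduction converts this into $\rank_\bZ\En(\bQ)=0$ and $\Sha(\En/\bQ)[2^\infty]\simeq\brbig{\bZ/2\bZ}^{2k}$, exactly as in the paper's proof.
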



\begin{proof} From $A^*$ symmetric  we know $A^*+{A^*}^T=0$, then the Cassels pairing has matrix representation\[\Ma{*}{{A^*}^T+D^*}{{A^*}+D^*}{}\] by (\ref{eq:matrixrep}). Since ${A^*}+D^*$ is non-singular, then the Cassels pairing on $\Sel_2(\En)/\En(\bQ)[2]$ is non-degenerate, whence as explained in the introduction
\[\rank_{\bZ} \En(\bQ)=0, \;\;\Sha(\En/\bQ)[2^\infty]\simeq \brbig{\bZ/2\bZ}^{2k}\]
\end{proof}

Now we can prove Theorem \ref{mainthm}:
\begin{proof} From $h_8(n)=k$ or $h_8(n)=k-1$ with $[(2,\sqrt{-n})]\not\in 4\cA$ we know $A^*=0$ by the epimorphism (\ref{eq:R*-8rank}) and Proposition \ref{gauss-Redei-c-version}, hence $A^*$ is symmetric. From $h_8(d_i)=0$ we get $D^*=\diag\brbig{1,\cdots,1}$ and
$A^*+D^*$ is non-singular, whence (1) and (2) of Theorem \ref{mainthm2} are satisfied in this case. Then from Theorem \ref{mainthm2} we have
\[\rank_{\bZ} \En(\bQ)=0, \;\;\Sha(\En/\bQ)[2^\infty]\simeq \brbig{\bZ/2\bZ}^{2k}\]
The proof is completed.
\end{proof}


For the case $k=2$ of Theorem \ref{mainthm2} can be characterized by:
\begin{cor}\label{cor2} Let $n=d_1d_2$ be a square-free positive integer with all prime factors congruent to $1 \pmod 8$ with $h_4(d_1)=h_4(d_2)=1$ such that: $\Leg{p_{i_1}}{p_{i_2}}=1$ for any prime divisor $p_{i_j}$ of $d_{i_j}$ with $i_1\not=i_2$. Then the   conditions  (1), (2) on $A^*$ in Theorem \ref{mainthm2} are equivalent to:
\begin{itemize}
\item $\Leg{d_1}{d_2}_4=\Leg{d_2}{d_1}_4$;
\item Either exact one of $h_8(d_1), h_8(d_2)$ is $0$ and  $\Leg{d_1}{d_2}_4$ is $-1$, or both $h_8(d_1), h_8(d_2)$ equal to $0$.
\end{itemize}
\end{cor}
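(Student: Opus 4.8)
The plan is to make conditions (1) and (2) of Theorem \ref{mainthm2} completely explicit in the case $k=2$ by computing the $2\times 2$ matrices $A^*$ and $D^*$ in terms of quartic residue symbols and $8$-ranks, and then to match the resulting $\bF_2$-linear conditions against the two displayed bullets.

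First I would write down $A^*$. Since every row of $A^*$ sums to zero (the choice of $c_2$ together with Proposition \ref{prop-C=C1+C2}), for $k=2$ we have $A^*=\left(\begin{smallmatrix} a & a\\ b & b\end{smallmatrix}\right)$ with $a=\ALeg{c_1}{d_1}$ and $b=\ALeg{c_1}{d_2}$, where $(a_1,b_1,c_1)$ is the fixed primitive positive solution of $c_1^2=d_1x^2+d_2y^2$ (recall $n/d_1=d_2$, $n/d_2=d_1$). The crucial step is the identification $(-1)^a=\Leg{d_2}{d_1}_4$ and $(-1)^b=\Leg{d_1}{d_2}_4$; both quartic symbols are defined because the hypothesis $\Leg{p_{i_1}}{p_{i_2}}=1$ forces $\Leg{d_1}{d_2}=\Leg{d_2}{d_1}=1$. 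To prove the first identity, reduce $c_1^2=d_1a_1^2+d_2b_1^2$ modulo $d_1$ to get $\Leg{c_1}{d_1}=\Leg{d_2b_1^2}{d_1}_4=\Leg{d_2}{d_1}_4\Leg{b_1}{d_1}$, and then show $\Leg{b_1}{d_1}=1$: reducing the same equation modulo the odd part $b_1'$ of $b_1$ gives $\Leg{d_1}{b_1'}=\Leg{c_1^2}{b_1'}=1$, using the primitivity-based coprimalities $\gcd(b_1,d_1a_1c_1)=1$; since $d_1\equiv 1\pmod 8$ we have $\Leg{2}{d_1}=1$, and quadratic reciprocity gives $\Leg{b_1}{d_1}=\Leg{b_1'}{d_1}=\Leg{d_1}{b_1'}=1$. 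Reducing modulo $d_2$ yields $(-1)^b=\Leg{d_1}{d_2}_4$ in the same way. Consequently condition (1), that $A^*$ be symmetric, says exactly $a=b$, i.e. $\Leg{d_2}{d_1}_4=\Leg{d_1}{d_2}_4$, which is precisely the first bullet.

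Next I would treat condition (2). Because $h_4(d_i)=1$ forces $h_8(d_i)\in\{0,1\}$, the matrix $D^*=\diag(1-h_8(d_1),1-h_8(d_2))$ has entries $\epsilon_i:=1-h_8(d_i)\in\{0,1\}$ that are $1$ exactly when $h_8(d_i)=0$. Assuming (1), so $a=b$, a short $\bF_2$-computation gives $\det(A^*+D^*)=a(\epsilon_1+\epsilon_2)+\epsilon_1\epsilon_2$. I would then split into three cases. If $\epsilon_1=\epsilon_2=1$ (both $8$-ranks zero) the determinant is $1$, so (2) holds automatically. If $\epsilon_1=\epsilon_2=0$ (both $8$-ranks positive) the determinant is $0$, so (2) fails, matching the failure of the second bullet. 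If exactly one $\epsilon_i$ equals $1$ (exactly one $8$-rank zero) the determinant is $a$, so (2) holds iff $a=1$, i.e. iff $\Leg{d_2}{d_1}_4=-1$, equivalently, under (1), iff $\Leg{d_1}{d_2}_4=-1$. Collecting these cases shows that, given (1), condition (2) is equivalent to the second bullet; together with the identification of (1) above this gives the claimed equivalence.

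The one genuinely computational point is the residue-symbol identity $(-1)^a=\Leg{d_2}{d_1}_4$ (and its twin for $b$); everything else is routine bookkeeping with $2\times 2$ matrices over $\bF_2$. The only subtleties in that step are absorbing the power of $2$ dividing $b_1$, which is harmless since $\Leg{2}{d_1}=1$, and the coprimality facts $\gcd(a_1,b_1)=\gcd(b_1,c_1)=1$ and $\gcd(c_1,2d_1d_2)=1$ (the latter also guaranteeing that $\ALeg{c_1}{d_1}$ and $\ALeg{c_1}{d_2}$ are defined), all of which are exactly the primitivity and integrality properties established for equations of this shape earlier in the paper.
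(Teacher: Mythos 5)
Your proposal is correct and follows essentially the same route as the paper: identify $\ALeg{c_1}{d_1}$ and $\ALeg{c_1}{d_2}$ with the quartic symbols $\Leg{d_2}{d_1}_4$ and $\Leg{d_1}{d_2}_4$ by reducing $c_1^2=d_1a_1^2+d_2b_1^2$ modulo $d_1$, $d_2$ and modulo $b_1$ (respectively $a_1$), so that condition (1) becomes the first bullet, and then check non-singularity of $A^*+D^*$ by the same three-case analysis on $h_8(d_1),h_8(d_2)$. The only cosmetic differences are that you package the case analysis in the determinant formula $\det(A^*+D^*)=a(\epsilon_1+\epsilon_2)+\epsilon_1\epsilon_2$ and spell out the coprimality and reciprocity details behind $\Leg{b_1}{d_1}=1$, which the paper leaves implicit.
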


\begin{proof} By the definition of $A^*$ with $k=2$,  we may chose $c_1=c_2$ such that
\begin{equation}\label{eq:c_1-2=}
c_1^2=d_1a_1^2+d_2b_1^2
\end{equation}
then $A^*$ is symmetric if and only if $\Leg{c_1}{d_1}=\Leg{c_1}{d_2}$. Since
\[\Leg{c_1}{d_1}=\Leg{c_1^2}{d_1}_4=\Leg{d_1a_1^2+d_2b_1^2}{d_1}_4=\Leg{d_2}{d_1}_4\cdot \Leg{ b_1}{d_1}=\Leg{d_2}{d_1}_4\]
where $\Leg{b_1}{d_1}=1$ follows from equation (\ref{eq:c_1-2=}) modulo  $b_1$. Similarly we get $\Leg{c_1}{d_2}=\Leg{d_1}{d_2}_4$, whence (1) is equivalent to $\Leg{d_1}{d_2}_4=\Leg{d_2}{d_1}_4$. Then under this condition $A^*$ has the form\[ \ALeg{c_1}{d_1}\cdot \left(
         \begin{array}{cc}
           1 & 1 \\
           1 & 1 \\
         \end{array}
       \right)
\]
If $h_8(d_1)=h_8(d_2)=0$ then $D^*=\diag(1,1)$ and  $A^*+D^*$ is non-singular. If $h_8(d_1)=h_8(d_2)=1$, then $D^*=0$ and $A^*+D^*$ is singular. If exact one of $h_8(d_1)$ and $h_8(d_2) $ is $0$, then $A^*+D^*$ is non-singular if and only if $\Leg{c_1}{d_1}=-1$. Hence this corollary is proved.
\end{proof}
\begin{rem}\label{rem:non-empty} For $k=2$, the condition of Theorem \ref{mainthm} is non-empty.  Since from Theorem \ref{mainthm}, we know the $8$-rank condition  is equivalent to $h_8(d_1)=h_8(d_2)=0$ and $[\fd_1]\in 4\cA$, where the latter is equivalent to $\Leg{d_1}{d_2}_4=\Leg{d_2}{d_1}_4=1$. By Theorem \ref{lem-LiTian} we know $h_8(d_i)=0$ is equivalent to $\ALeg2{d_i}_4\equiv \frac{d_i-9}8 \pmod 2$, where $\ALeg2{d_i}_4$ is defined similarly as additive Legendre symbol. Then everything reduces to find   $n$ satisfying certain residue symbol properties, then from independence of residue symbol property over $\bQ(i)$ in our coming paper \cite{zhangjiewang2015congDist}, we know  there are infinitely many such $n$, in particular we can get their density.
\end{rem}


\textbf{Acknowledgements}\\

I am greatly indebted to Professor Ye Tian, my supervisor, for many  instructions and suggestions! I would like to thank Li Cai and Zhe Zhang for carefully reading the paper and giving valuable comments. I also thank Professor Qin Yue for  instructions on ideal class group.

\bigskip

\noindent Zhangjie Wang, \\
 Yau  Mathematical Sciences Center, \\
Tsinghua University, Beijing 100084,China. \\
{\it {zjwang@math.tsinghua.edu.cn}  }

\end{CJK}

\begin{thebibliography}{99}



\bibitem{Heegner1952}
K. Heegner,  `Diophantische analysis und modulfunktionen,'  {\em Mathematische
  Zeitschrift}, vol.~56 (1952) pp.~227--253.

\bibitem{Tian-congruent}
{Y. Tian  }, `Congruent numbers and {H}eegner points,' {\em  Camb. J. Math.}, vol.~2, no. 1 (2014) pp.~ 117--161.



\bibitem{LiDeLangTian}
{D. Li, Y. Tian}, `On the {B}irch-{S}winnerton-{D}yer conjecture of
  elliptic curves ${E}_{D}: y^2=x^3-{D}^2x$,' {\em Acta Math. Sin. (Engl. Ser.)}, vol.~16(2000) pp.~229--236.

\bibitem{rhoades20092}
R.  Rhoades, `{$2$-Selmer groups and the Birch--Swinnerton-Dyer Conjecture
  for the congruent number curves},' {\em  J. Number Theory}, vol.~129,
  no.~6(2009)pp.~1379--1391.

\bibitem{ShenxingZhangYiouyang}
{Y. Ouyang and S. Zhang}, `On non-congruent numbers with $1$ modulo $4$
  prime factors,' {\em Sci. China Math.},  vol. 57,  no. 3(2014)pp. 649--658.

\bibitem{Cassels1998CasPairing}
J. Cassels, `Second descents for elliptic curves,' {\em J. Reine Angew. Math.},    vol. 494(1998)pp. 101--127.

\bibitem{zhangjiewang2015congDist}
{Z. Wang}, `Congruent elliptic curves with non-trivial {S}hafarevich-{T}ate
  groups-distribution part,' {\em preprint}.

\bibitem{HeathBrownMonsky1993Selmergroup}
{D. Heath-Brown }, `The size of selmer groups for the congruent number problem
  {I}{I}, with appendix by {Monsky},' {\em Invent. Math.},
  vol.~118(1994)pp.~331--370.

\bibitem{heckeGTM77}
{E. Hecke}, {\em Lectures on the theory of algebraic numbers}, Graduate Texts in Mathematics, vol. 77(1981, \newblock Springer-Verlag)pp. xii+239.

\bibitem{nemenzo1992}
F. Nemenzo and H. Wada, `An elementary proof of Gauss' genus theorem,' {\em
  Proc. Japan Acad. Ser. A Math. Sci.}, vol.~68, no.~4(1992)pp.~94--95.

\bibitem{yueJung2011eightrank}
{H. Jung and  Q. Yue}, `$8$-ranks of class groups of imaginary quadratic
  number fields and their densities,' {\em J. Korean Math. Soc.}   vol. 48, no. 6(2011)pp. 1249--1268.


\end{thebibliography}
\end{document}